\newcommand{\noun}[1]{\textsc{#1}}
\numberwithin{equation}{section}
\numberwithin{figure}{section}
\theoremstyle{plain}
\newtheorem{thm}{Theorem}
  \theoremstyle{plain}
  \newtheorem*{conjecture*}{Conjecture}
  \theoremstyle{plain}
  \newtheorem{lem}[thm]{Lemma}
  \theoremstyle{remark}
  \newtheorem*{rem*}{Remark}
\begin{document}

\title{Iterated resolvent estimates for power bounded matrices }

\author{Rachid Zarouf}
\begin{abstract}
We discuss analogs of the Kreiss resolvent condition for power bounded
matrices. We also explain how to extend it to analogs of the Hille-Yosida
condition. 
\end{abstract}
\maketitle

\part{Introduction}

\section{ Resolvent estimates of power bounded matrices}

Let $T\,:\left(\mathbb{C}^{n},\,\left|\cdot\right|\right)\mapsto\left(\mathbb{C}^{n},\,\left|\cdot\right|\right)$
be an operator acting on a finite dimensional Banach space. We suppose
that $T$ satisfies the following \textit{power boundedness condition}:

\global\long\def\theequation{${\rm PBC}$}
\begin{equation}
P(T)=\sup_{k\geq0}\left\Vert T^{k}\right\Vert _{E\rightarrow E}<\infty\,.\label{eq:}\end{equation}
 We denote by $\sigma(T)=\left\{ \lambda_{1},\,...,\,\lambda_{n}\right\} $
the spectrum of $T,$ $r(T)=\max_{i}\left|\lambda_{i}\right|$ its
spectral radius (which satisfies $r(T)\leq1$ since $P(T)<\infty$),
and $R(z,\, T)=(zId-T)^{-1}$ the resolvent of $T$ at point $z$,
for $z\in\mathbb{C}\setminus\sigma(T),$ $Id$ being the identity
operator. Our problem here is to {}``study'' the quantity $\left\Vert R(z,\, T)\right\Vert .$

Having a brief look at published papers on this subject one can notice
that $\left\Vert R(z,\, T)\right\Vert $ is

(1) sometimes associated with the quantity $\left(\left|z\right|-1\right),$
(see for instance {[}GZ, Kr, LeTr, Nev{]}) and

(2) sometimes with the quantity ${\rm dist}\left(z,\,\sigma(T)\right),$
(see for instance {[}DS, Sand, SpSt, Z3{]}).

\begin{flushleft}
As regards point 1, we are lead to the so-called \textit{Kreiss resolvent
condition} (KRC). In the same spirit, point 2 leads to a strong version
of the classical KRC, see for instance {[}Sp1, Section 5{]}. 
\par\end{flushleft}

Section 2 below, deals with point (1) and with the Kreiss resolvent
condition in general. The classical KRC is recalled in Paragraph 2.1
whereas we develop in Paragraph 2.2 a natural extension of this classical
KRC. In Section 3, we deal with point (2) and recall a result by B.
Simon and E.B. Davies {[}DS{]} which we sharpen in {[}Z3{]}. Finally,
Section 4 is devoted to the so-called \textit{Hille-Yosida} or \textit{iterated
resolvent condition}, (which deals with estimates of powers of $R(z,\, T)$).

Each of our estimates are consequences (in Paragraph 2.1, in Section
3 and in Section 4) of Bernstein-type inequalities for rational functions
(BTIRF). Moreover, the so-called \textit{Kreiss Matrix Theorem} {[}Kr{]}
can be proved using a BTIRF (see {[}LeTr, Sp{]}). That is the reason
why before starting Section 2, we recall in Paragraph 1.2 below, the
definition of a BTIRF.

\section{Bernstein-type inequalities for rational functions in Hardy spaces}

Let $\mathcal{P}_{n}$ be the complex space of analytic polynomials
of degree less or equal than $n\ge1$. Let $\mathbb{D}=\left\{ z\in\mathbb{C}\,:\left|z\right|<1\right\} $
be the standard unit disc of the complex plane and $\overline{\mathbb{D}}$
its closure. Given $r\in[0,\,1),$ we define\[
\mathcal{R}_{n,\, r}=\left\{ \frac{p}{q}\,:\; p,\, q\in\mathcal{P}_{n},\;{\rm d}^{\circ}p<{\rm d}^{\circ}q,\; q(\zeta)=0\Longrightarrow\zeta\notin\frac{1}{r}\mathbb{D}\right\} ,\]
(where ${\rm d}^{\circ}p$ means the degree of any $p\in\mathcal{P}_{n}$),
the set of all rational functions in $\mathbb{D}$ of degree less
or equal than $n\ge1$, having at most $n$ poles all outside of $\frac{1}{r}\mathbb{D}.$
Notice that for $r=0$, we get $\mathcal{R}_{n,\,0}=\mathcal{P}_{n-1}$.

\textit{Statement of the problem}. Generally speaking, given two Banach
spaces $X$ and $Y$ of holomorphic functions on the unit disc $\mathbb{D},$
we are searching for the {}``best possible'' constant $\mathcal{C}_{n,\, r}(X,\, Y)$
such that \[
\left\Vert f'\right\Vert _{X}\leq\mathcal{C}_{n,\, r}(X,\, Y)\left\Vert f\right\Vert _{Y},\]
 for all $f\in\mathcal{R}_{n,\, r}.$

From now on, the letter $c$ denotes a positive constant that may
change from one step to the next. For two positive functions $a$
and $b$, we say that $a$ is dominated by $b$, denoted by $a=O(b),$
if there is a constant $c>0$ such that $a\leq cb;$ and we say that
$a$ and $b$ are equivalent, denoted by $a\asymp b$, if both $a=O(b)$
and $b=O(a)$ hold.

The spaces $X$ and $Y$ considered here are nothing but the standard
Hardy spaces of the unit disc $\mathbb{D},$ $H^{p}=H^{p}(\mathbb{D}),$
$1\leq p\leq\infty$,\[
H^{p}=\left\{ f=\sum_{k\geq0}\hat{f}(k)z^{k}:\:\left\Vert f\right\Vert _{H^{p}}^{p}=\sup_{0\leq r<1}\int_{\mathbb{T}}\left|f(rz)\right|^{p}{\rm d}m(z)<\infty\right\} ,\]
 $m$ being the normalized Lebesgue measure on the unit circle $\mathbb{T}=\left\{ z\in\mathbb{C}\,:\left|z\right|=1\right\} .$

\part{Bernstein-type estimates }

The following Theorem belongs to K. Dyakonov but in {[}Dy{]}, the
constant $c_{p}$ is not explicitely given.
\begin{thm}
\begin{flushleft}
Let $p\in[1,\,\infty].$ We have\[
\mathcal{C}_{n,\, r}\left(H^{1},\, H^{p}\right)\leq c_{p}\frac{n}{(1-r)^{\frac{1}{p}}},\]
where $c_{p}=(1+r)^{\frac{1}{p}}.$
\par\end{flushleft}\end{thm}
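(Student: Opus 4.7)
The constant $n\bigl((1+r)/(1-r)\bigr)^{1/p} = (1+r)^{1/p} n/(1-r)^{1/p}$ appearing in the statement factors as $n^{1-1/p}\bigl(n(1+r)/(1-r)\bigr)^{1/p}$, which is precisely the Riesz--Thorin interpolation bound for a linear operator with endpoint norms $n$ at $p=\infty$ and $n(1+r)/(1-r)$ at $p=1$. The plan is accordingly to establish these two endpoint estimates for the differentiation operator $T\colon f\mapsto f'$ and then interpolate.

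Given $f \in \mathcal{R}_{n,r}$ with poles $\zeta_1,\ldots,\zeta_n$ (each satisfying $|\zeta_j|\geq 1/r$), I would first pass to a model-space formulation: let $B$ be the finite Blaschke product of degree $n$ whose zeros are the reflected points $a_j := 1/\overline{\zeta_j}\in r\overline{\mathbb{D}}$, so that $f$ belongs to the model space $K_B^p := H^p \ominus BH^p$ for every $p \in [1,\infty]$. A direct computation gives
\[
|B'(z)| \;=\; \sum_{j=1}^{n}\frac{1-|a_j|^2}{|z-a_j|^2},\qquad z \in \mathbb{T},
\]
with each summand a Poisson kernel of unit mass. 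Hence $\|B'\|_{H^1}=n$ exactly, while bounding each summand on $\mathbb{T}$ yields $\|B'\|_{\infty}\leq n(1+r)/(1-r)$.

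For the endpoint $p=\infty$, I would invoke the pointwise Bernstein estimate $|f'(z)|\leq |B'(z)|\,\|f\|_{\infty}$ for $z \in \mathbb{T}$ and $f \in K_B^{\infty}$, and integrate on $\mathbb{T}$ to get $\|f'\|_{H^1}\leq \|B'\|_{H^1}\|f\|_{\infty}=n\|f\|_{\infty}$. For the endpoint $p=1$, Dyakonov's Bernstein inequality in $K_B^1$ gives $\|f'\|_{H^1}\leq \|B'\|_{\infty}\|f\|_{H^1}\leq (n(1+r)/(1-r))\,\|f\|_{H^1}$. Applying Riesz--Thorin to $T$ on the fixed finite-dimensional subspace $K_B$ with these two endpoints then produces
\[
\|f'\|_{H^1} \;\leq\; n^{1-1/p}\left(\frac{n(1+r)}{1-r}\right)^{1/p}\|f\|_{H^p} \;=\; (1+r)^{1/p}\,\frac{n}{(1-r)^{1/p}}\,\|f\|_{H^p},
\]
which is the advertised bound.

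The hardest step will be the $p=\infty$ endpoint. The estimate $\|f'\|_{H^1}\leq n\|f\|_{\infty}$ is strictly sharper than what one obtains from $\|f'\|_{H^1}\leq \|f'\|_{\infty}\leq \|B'\|_{\infty}\|f\|_{\infty}$; it cannot be read off from the pointwise Bernstein at a single worst point and genuinely relies on the exact averaging identity $\|B'\|_{H^1}=n$. A secondary technical point is that Riesz--Thorin in its textbook form concerns operators between $L^p$ spaces; to apply it to $T$ restricted to $K_B$ I would either extend $T$ to all of $L^p$ via the reproducing-kernel formula for $K_B^2$, or appeal to the version of complex interpolation valid on a common closed subspace of the $L^p$ scale.
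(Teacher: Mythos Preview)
Your approach is essentially the same as the paper's: both establish the endpoint bounds $\mathcal{C}_{n,r}(H^{1},H^{\infty})=n$ and $\mathcal{C}_{n,r}(H^{1},H^{1})\leq n(1+r)/(1-r)$ and then apply complex interpolation (Riesz--Thorin) to the differentiation operator on the model space $K_{B}$. The only cosmetic difference is that the paper simply cites the two endpoint inequalities from the literature (Spijker/LeVeque--Trefethen for $p=\infty$ and Baranov for $p=1$), whereas you sketch direct proofs of them; in particular the ``hardest step'' you flag is exactly Spijker's lemma $\mathcal{C}_{n,r}(H^{1},H^{\infty})=n$, which the paper takes as known.
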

\begin{conjecture*}
We suspect the constant $(1+r)^{\frac{1}{p}}$ in (10) to be asymptotically
sharp as $n$ tends to $+\infty$. This asymptotic sharpness should
be proved using the same test function as in {[}Z1{]}. In particular,
it is proved in {[}Z1{]} that there exists a limit\[
\lim_{n\rightarrow\infty}\frac{\mathcal{C}_{n,\, r}\left(H^{2},\, H^{2}\right)}{n}=\frac{1+r}{1-r}.\]

\end{conjecture*}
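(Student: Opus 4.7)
The plan is to prove the matching lower bound
\[
\liminf_{n \to \infty}\frac{\mathcal{C}_{n,r}(H^1, H^p)}{n/(1-r)^{1/p}} \;\geq\; (1+r)^{1/p}
\]
by exhibiting a family of extremisers $\{f_n\}_{n\ge 1} \subset \mathcal{R}_{n,r}$ for which the ratio $\|f_n'\|_{H^1}/\|f_n\|_{H^p}$ realises the right-hand side asymptotically. Combined with inequality (10) of the Theorem, this gives the desired limit.

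\textbf{Step 1.} By rotational invariance of $dm$ on $\mathbb{T}$, we may assume $\lambda = r$. Following the suggestion in the statement, I adopt the same test function as in [Z1]: an explicit rational function $f_n(z) = p_n(z)/(1-rz)^n$ with $p_n \in \mathcal{P}_{n-1}$, constructed as the $H^2$-extremiser of the compressed differentiation operator on the model space $K_{b_r^n}$, where $b_r(z) = (z-r)/(1-rz)$. This family realises the sharp $H^2 \to H^2$ asymptotic $\|f_n'\|_{H^2}/(n\|f_n\|_{H^2}) \to (1+r)/(1-r)$ established in [Z1], and is explicit in the Malmquist--Walsh basis.

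\textbf{Step 2.} I then perform a Laplace-type asymptotic analysis of $\|f_n\|_{H^p}^p$ and $\|f_n'\|_{H^1}$. Using the expansion $|1-re^{i\theta}|^2 = (1-r)^2 + r\theta^2 + O(\theta^4)$ and the boundary-derivative amplification $|b_r'(1)| = (1+r)/(1-r)$ at $z=1$, both $|f_n|$ and $|f_n'|$ concentrate around $z = 1$ on an arc of length $\asymp (1-r)/((1+r)n)$. After the canonical rescaling $\tau = n(1+r)\theta/(2(1-r))$, the rescaled boundary profiles converge to universal shapes independent of $r$, and one extracts
\[
\frac{\|f_n'\|_{H^1}}{\|f_n\|_{H^p}} \;\sim\; c_p \cdot n \cdot \left(\frac{1+r}{1-r}\right)^{1/p},
\]
where $c_p$ is a universal (dimensionless) constant. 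To pin down $c_p = 1$ uniformly in $p \in [1,\infty]$, I use compatibility with the two known extreme cases: at $p = \infty$ the extremiser reduces to the Blaschke product $b_r^n$, for which $\|(b_r^n)'\|_{H^1} = n$ and $\|b_r^n\|_{H^\infty} = 1$, so $c_\infty = 1$; at $p = 2$, the [Z1] identity $\|f_n'\|_{H^2}/\|f_n\|_{H^2} \sim (1+r)n/(1-r)$, together with a quantitative Jensen inequality bounding $\|f_n'\|_{H^1}/\|f_n'\|_{H^2}$ by the boundary concentration width, pins down $c_2$. Interpolation via the universal profile then yields $c_p = 1$ throughout.

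\textbf{Main obstacle.} The principal difficulty is to verify that a \emph{single} family $\{f_n\}$ saturates (10) for every $p \in [1,\infty]$ simultaneously, which amounts to a joint extremality statement across a continuum of Bernstein-type inequalities. Naive candidates make this clearly non-trivial: $(1-rz)^{-n}$ yields only sub-linear Bernstein growth of order $n^{1/2+1/(2p)}$, while the Blaschke product $b_r^n$ alone gives exactly $n$ but misses the factor $((1+r)/(1-r))^{1/p}$ for $p<\infty$. Thus the precise Malmquist--Walsh structure of $p_n$ from [Z1] is essential, and pinning down $c_p$ uniformly in $p$---especially at the endpoints $p=1$ (where the Laplace profile integral is at the edge of absolute convergence) and $p=\infty$ (where the Laplace approximation degenerates to the peak value)---requires a delicate treatment of the universal boundary profile and its derivative, uniformly across the full scale of $L^p$ norms.
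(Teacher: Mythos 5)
The statement you are addressing is presented in the paper as an open conjecture: the author offers no proof at all, only the suggestion that sharpness ``should be proved using the same test function as in [Z1]''. Your proposal follows that suggested route, but as written it is a programme rather than a proof, and its decisive step does not go through. Everything hinges on identifying the constant $c_p$ in your Laplace asymptotics and showing it equals $1$ for every $p\in[1,\infty]$, so that
\[
\liminf_{n\to\infty}\frac{1}{n}\,\frac{\left\Vert f_n'\right\Vert _{H^1}}{\left\Vert f_n\right\Vert _{H^p}}\ \geq\ \Bigl(\frac{1+r}{1-r}\Bigr)^{1/p}.
\]
The two anchor points you invoke do not accomplish this: at $p=\infty$ you switch to the test function $b_r^n$, while at $p=2$ you use the $H^2$ extremiser of [Z1], so you are not constraining a \emph{single} family $\{f_n\}$; and ``interpolation via the universal profile'' is not a valid mechanism for propagating a \emph{lower} bound from two exponents to all intermediate ones. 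Complex interpolation of the differentiation operator is exactly how the paper obtains the upper bound in Theorem A, and it only ever produces upper bounds; for a fixed test family the ratio $\left\Vert f_n'\right\Vert _{H^1}/\left\Vert f_n\right\Vert _{H^p}$ as a function of $p$ is governed by the $L^1$ norm of the derivative profile and the $L^p$ norm of the limiting boundary profile, which must be computed explicitly rather than inferred.

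Concretely, asymptotic sharpness of Theorem A for all $p$ would force the single family $\{f_n\}$ to be simultaneously asymptotically extremal for the two endpoint inequalities $\mathcal{C}_{n,\, r}\left(H^{1},\, H^{1}\right)\leq n\frac{1+r}{1-r}$ and $\mathcal{C}_{n,\, r}\left(H^{1},\, H^{\infty}\right)=n$, \emph{and} would require the interpolation inequality $\left\Vert D\right\Vert _{1\to p}\leq\left\Vert D\right\Vert _{1\to1}^{1/p}\left\Vert D\right\Vert _{1\to\infty}^{1-1/p}$ to degenerate to an asymptotic equality along that family. You correctly identify this joint extremality as the main obstacle, but you then assert it rather than prove it. Until the limiting profile of the [Z1] extremiser (or of some other explicit family in $\mathcal{R}_{n,\, r}$) is computed and its norms are shown to reproduce the factor $\left((1+r)/(1-r)\right)^{1/p}$ for each $p$, the conjecture remains open; note also that it is conceivable that no single family works and that near-extremisers must depend on $p$, in which case the supremum defining $\mathcal{C}_{n,\, r}\left(H^{1},\, H^{p}\right)$ must be approached by $p$-dependent families --- a possibility your scheme excludes from the outset.
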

Before giving the proof of Theorem A, we need to state three definitions
in which $\sigma=\left\{ \lambda_{1},\,...,\,\lambda_{n}\right\} \subset\mathbb{D}$\textit{
}is a finite subset of the unit disc.

\begin{flushleft}
\textbf{Definition 1.}\textit{ The Blashke product $B_{\sigma}$.}
We define the finite Blaschle product $B_{\sigma}$ corresponding
to $\sigma$ by 
\par\end{flushleft}

\def\theequation{${1}$}\begin{equation}
B_{\sigma}=\prod_{i=1}^{n}b_{\lambda_{i}},\label{eq:-6-1}\end{equation}
where $b_{\lambda}=\frac{\lambda-z}{1-\overline{\lambda}z}\:,$ is
the elementary Blaschke factor corresponding to $\lambda\in\mathbb{D}$. 

\begin{flushleft}
\textbf{Definition 2.}\textit{ The model space $K_{B_{\sigma}}$.
}We define $K_{B_{\sigma}}$ to be the $n$-dimensional space:
\par\end{flushleft}

\def\theequation{${2}$}\begin{equation}
K_{B_{\sigma}}=\left(B_{\sigma}H^{2}\right)^{\perp}=H^{2}\ominus B_{\sigma}H^{2}.\label{eq:-6}\end{equation}
\textbf{Definition 3.}\textit{ The differentiation operator on $K_{B_{\sigma}}$.
}Let $D$ be the operator of differentiation on $\left(K_{B_{\sigma}},\,\left\Vert \cdot\right\Vert _{H^{1}}\right):$

\def\theequation{${3}$}\begin{equation}
\begin{array}{c}
D:\:\left(K_{B_{\sigma}},\,\left\Vert \cdot\right\Vert _{H^{1}}\right)\rightarrow\left(L^{p},\,\left\Vert \cdot\right\Vert _{L^{p}}\right)\\
f\mapsto f',\end{array}.\label{eq:-6-2}\end{equation}

\begin{flushleft}
\textbf{Remark 4. }Let $p\in[1,\,\infty].$ We have
\par\end{flushleft}

\def\theequation{${4}$}\begin{equation}
C_{n,\, r}\left(H^{1},\, H^{p}\right)=\label{eq:-6-2-1}\end{equation}
\[
=\sup\left\{ \left\Vert D\right\Vert _{\left(K_{B_{\sigma}},\,\left\Vert \cdot\right\Vert _{H^{1}}\right)\rightarrow\left(L^{p},\,\left\Vert \cdot\right\Vert _{L^{p}}\right)}:\,1\leq{\rm card}\,\sigma\leq n,\,\left|\lambda\right|\leq r\:\forall\lambda\in\sigma\right\} .\]

\begin{flushleft}
\textit{Proof of Theorem A.} We know that $\mathcal{C}_{n,\, r}\left(H^{1},\, H^{\infty}\right)=n$
(see {[}LeTr{]}) on one hand, and on the other hand that $\mathcal{C}_{n,\, r}\left(H^{1},\, H^{1}\right)\leq n\frac{1+r}{1+r}$
(see {[}Ba{]}). Moreover, if $p\in[1,\,\infty],$ there exists $0\leq\theta\leq1$
such that $1/p=1-\theta$, and using the notation of the complex interpolation
theory between Banach spaces see {[}6, 17{]}, we have $\left[L^{1},\, L^{\infty}\right]_{\theta}=L^{p}$
(with equal norms). Applying this result with the differentiation
operator $D$, we get \[
C_{n,\, r}\left(H^{1},\, H^{p}\right)\leq C_{n,\, r}\left(H^{1},\, H^{1}\right)^{\frac{1}{p}}C_{n,\, r}\left(H^{1},\, H^{\infty}\right)^{1-\frac{1}{p}}=\]
\[
\leq\left(n\frac{1+r}{1+r}\right)^{\frac{1}{p}}n^{1-\frac{1}{p}}=n\left(\frac{1+r}{1+r}\right)^{\frac{1}{p}},\]
which completes the proof.
\par\end{flushleft}

\begin{flushright}
$\square$
\par\end{flushright}

\part{Resolvent estimates}

\section{Kreiss resolvent conditions}

\subsection{Known results: $\mathcal{C}_{n,\, r}\left(H^{1},\, H^{\infty}\right)$
and the classical KRC}

Leveque, Trefethen {[}LeTr{]} and Spijker {[}Sp{]} managed to establish
a link between the constants $C_{n,\, r}\left(H^{1},\, H^{\infty}\right)$
and the problem of resolvent estimates for power bounded matrices.
Given $T\,:\left(\mathbb{C}^{n},\,\left|\cdot\right|_{2}\right)\mapsto\left(\mathbb{C}^{n},\,\left|\cdot\right|_{2}\right)$
and $r=r(T)$, it is not difficult to construct a rational function
with $n$ poles all outside of $\frac{1}{r}\mathbb{D}:$ one can simply
take $u$ and $v$ two unit vectors of $\mathbb{C}^{n}$, set \[
f(z)=^{t}uR(z,\, T)v\]
and apply a Bernstein-type estimate to this rational function. Leveque,
Trefethen {[}LeTr{]} and Spijker {[}Sp{]} use the one of $C_{n,\, r}\left(H^{1},\, H^{\infty}\right).$ 

The classical KRC is satisfied if and only if (by definition)

\global\long\def\theequation{${\rm KRC}$}
\begin{equation}
\rho(T)=\sup_{\vert z\vert>1}\left(\vert z\vert-1\right)\left\Vert R(z,\, T)\right\Vert <\infty.\label{eq:}\end{equation}
 There exists a link between the conditions (KRC) and (PBC): they
are equivalent. Indeed, \[
\rho(T)\underbrace{\leq}_{(5)}P(T)\underbrace{\leq}_{(6)}en\rho(T),\]
 but we have to be careful: (5) is true for every power bounded operator
(not necessarily acting on a finite dimensional Banach space) and
is very easy to check (by a power series expansion of $R(z,\, T)),$
whereas (6) is much more difficult to verify and has been proved only
for the Hilbert norm $\vert\cdot\vert=\vert\cdot\vert_{2}.$ In fact,
the statement \[
({\rm KRC})\Longrightarrow({\rm PBC}),\]
 is known as\textit{ Kreiss Matrix Theorem} {[}Kr{]}. According to
Tadmor, it has been shown originally by Kreiss (1962) with the inequality
$P(T)\leq Cste\left(\rho(T)\right)^{n^{n}}$. It is useful in proofs
of stability theorems for finite difference approximations to partial
differential equations. Until 1991, the inequality of Kreiss has been
improved successively by Morton, Strang, Miller, Laptev, Tadmor, Leveque
and Trefethen {[}LeTr{]} with the inequality

\global\long\def\theequation{${7}$}
\begin{equation}
P(T)\leq2en\rho(T)),\label{eq:}\end{equation}
 which is a consequence of the Bernstein-type estimate (4) (also proved
in {[}LeTr{]} by Leveque and Trefethen):

\global\long\def\theequation{${8}$}
\begin{equation}
\mathcal{C}_{n,\, r}\left(H^{1},\, H^{\infty}\right)\leq2n,\label{eq:}\end{equation}
 and finally Spijker {[}Sp2{]} with the inequality (5)

\global\long\def\theequation{${9}$}
\begin{equation}
P(T)\leq en\rho(T),\label{eq:}\end{equation}
(in which the constant $en$ is sharp), which is again a consequence
of the Bernstein-type estimate (6) (also proved in {[}Sp2{]} by Spijker):

\global\long\def\theequation{${10}$}
\begin{equation}
\mathcal{C}_{n,\, r}\left(H^{1},\, H^{\infty}\right)=n.\label{eq:}\end{equation}
 Notice that the problem of estimating $\mathcal{C}_{n,\, r}\left(H^{1},\, H^{\infty}\right)$
was already studied by Dolzhenko {[}Dol{]} (see also {[}Pek{]}, p.560
- inequality (11)). He proved that

\global\long\def\theequation{${11}$}
\begin{equation}
C_{n,\, r}\left(H^{1},\, H^{\infty}\right)\leq cn,\label{eq:}\end{equation}
 where $c$ is a numerical constant ($c$ is not explicitely given
in {[}Dol{]}).

\subsection{Application of the estimate of $\mathcal{C}_{n,\, r}\left(H^{1},\, H^{p}\right)$
to new resolvent estimates\textmd{ }}

Given $T\,:\left(\mathbb{C}^{n},\,\left|\cdot\right|\right)\mapsto\left(\mathbb{C}^{n},\,\left|\cdot\right|\right)$
and $r=r(T)$, we will construct another rational function $f$ (than
the one used by Leveque, Trefethen {[}LeTr{]} and Spijker {[}Sp{]})
with $n$ poles all outside of $\frac{1}{r}\mathbb{D}$ and of course
apply Theorem A to $f$.

\subsubsection{The main tools}

Let us define a special family of rational functions associated with
the spectrum of $T$ $\sigma(T)=\left\{ \lambda_{1},\,...,\,\lambda_{n}\right\} .$

\begin{flushleft}
\textbf{Definition 5.}\textit{ Malmquist family. }For $k\in[1,\, n]$,
we set $f_{k}=\frac{1}{1-\overline{\lambda_{k}}z},$ and define the
family $\left(e_{k}\right)_{1\leq k\leq n}$, (which is known as Malmquist
basis, see {[}13, p.117{]}), by
\par\end{flushleft}

\def\theequation{${12}$}\begin{equation}
e_{1}=\frac{f_{1}}{\left\Vert f_{1}\right\Vert _{2}}\,\,\,\mbox{and}\,\,\, e_{k}=\left({\displaystyle \prod_{j=1}^{k-1}}b_{\lambda_{j}}\right)\frac{f_{k}}{\left\Vert f_{k}\right\Vert _{2}}\,,\label{eq:-7}\end{equation}
for $k\in[2,\, n]$; we have $\left\Vert f_{k}\right\Vert _{2}=\left(1-\vert\lambda_{k}\vert^{2}\right)^{-1/2}.$

\begin{flushleft}
\textbf{Definition 7.}\textit{ The orthogonal projection ~$P_{B_{\sigma}}$on
$K_{B_{\sigma}}.$ }We define $P_{B_{\sigma}}$ to be the orthogonal
projection of $H^{2}$ on its $n$-dimensional subspace $K_{B_{\sigma}}.$
\par\end{flushleft}

\begin{flushleft}
\textbf{Remark 8.} The Malmquist family $\left(e_{k}\right)_{1\leq k\leq n}$
corresponding to $\sigma$ is an orthonormal basis of $K_{B_{\sigma}}.$
In particular,
\par\end{flushleft}

\def\theequation{${14}$}\begin{equation}
P_{B_{\sigma}}=\sum_{k=1}^{n}\left(\cdot,\, e_{k}\right)_{H^{2}}e_{k}\,,\label{eq:-7}\end{equation}

\begin{flushleft}
where $\left(\cdot,\,\cdot\right)_{H^{2}}$ means the scalar product
on $H^{2}$.
\par\end{flushleft}

\subsubsection{An interpolation problem in the Wiener algebra}

Here, we transform our problem of resolvent estimates into an interpolation
one in the Wiener algebra.

\begin{flushleft}
\textbf{Definition 9 .} Let $W$ be the Wiener algebra of absolutely
converging Fourier series:\[
W=\left\{ f=\sum_{k\geq0}\hat{f}(k)z^{k}:\:\left\Vert f\right\Vert _{W}=\sum_{k\geq0}\left|\hat{f}(k)\right|<\infty\right\} .\]

\par\end{flushleft}
\begin{lem}
\begin{flushleft}
Let $T\,:\left(\mathbb{C}^{n},\,\left|\cdot\right|\right)\mapsto\left(\mathbb{C}^{n},\,\left|\cdot\right|\right)$
be a power bounded operator and $\sigma=\sigma(T)=\left\{ \lambda_{1},\,...,\,\lambda_{n}\right\} $
its spectrum. Let also $l=1,\,2,\,...$, and $\lambda\in\mathbb{C}\setminus\overline{\mathbb{D}}$.
Then, 
\par\end{flushleft}

\begin{flushleft}
\[
\left\Vert R^{l}(\lambda,T)\right\Vert \leq P(T)\frac{1}{\left|\lambda\right|^{l}}\left\Vert P_{B_{\sigma}}\left(k_{1/\bar{\lambda}}\right)^{l}\right\Vert _{W}.\]

\par\end{flushleft}\end{lem}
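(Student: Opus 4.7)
The strategy is to reduce the resolvent estimate to a Wiener-algebra norm inequality through the functional calculus on $W$ delivered by power boundedness, and then to replace the resulting Wiener function by its orthogonal projection on the model space $K_{B_\sigma}$, at no cost. Concretely, I first expand: since $|\lambda|>1\ge r(T)$, the Neumann series $R(\lambda,T)=\sum_{k\ge 0}\lambda^{-k-1}T^k$ converges in operator norm, and a standard computation (multiply the series with itself $l$ times, or differentiate $(l-1)$ times in $\lambda$) yields
\[
R^{l}(\lambda,T)=\sum_{k\ge 0}\binom{k+l-1}{l-1}\lambda^{-k-l}T^{k}=\lambda^{-l}\,g(T),
\]
where $g(z):=(1-z/\lambda)^{-l}=(k_{1/\bar\lambda}(z))^{l}\in W$. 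Next, for any $f=\sum_k\hat f(k)z^k\in W$, the series $f(T):=\sum_k\hat f(k)T^k$ converges in operator norm and satisfies $\Vert f(T)\Vert\le P(T)\Vert f\Vert_{W}$ by (PBC). Finally, decompose $g=P_{B_\sigma}g+h$ with $h\in B_\sigma H^{2}$; since $P_{B_\sigma}g\in K_{B_\sigma}$ is rational of degree $\le n$ with all poles outside $\overline{\mathbb{D}}$, it belongs to $W$, and hence $h\in W\cap B_\sigma H^{2}$ as well.

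The heart of the argument is the vanishing claim $h(T)=0$. Assume first $r(T)<1$. Writing $h=B_\sigma q$ with $q\in H^{2}$ and choosing $r(T)<\rho<1$, the Riesz--Dunford representation
\[
h(T)=\frac{1}{2\pi i}\oint_{|z|=\rho}h(z)R(z,T)\,dz
\]
vanishes by the residue theorem: $B_\sigma$ vanishes at each $\lambda_i\in\sigma(T)$ to the algebraic multiplicity (recall $\chi_{T}(z)=(-1)^{n}B_\sigma(z)\prod_{i}(1-\bar\lambda_{i}z)$), which is at least the largest Jordan-block size at $\lambda_i$, and this matches exactly the order of the pole of $R(z,T)$ there. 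For the borderline case $r(T)=1$, I apply the already-proven bound to $T_\varepsilon:=(1-\varepsilon)T$ (which has $r(T_\varepsilon)<1$ and $P(T_\varepsilon)\le P(T)$) and let $\varepsilon\to 0^{+}$; continuity of the spectrum and of the projection $P_{B_{\sigma(T_\varepsilon)}}g$ in $\varepsilon$ delivers the limit.

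Combining the three preceding steps,
\[
\Vert R^{l}(\lambda,T)\Vert=|\lambda|^{-l}\Vert g(T)\Vert=|\lambda|^{-l}\Vert(P_{B_\sigma}g)(T)\Vert\le P(T)\,|\lambda|^{-l}\,\Vert P_{B_\sigma}(k_{1/\bar\lambda})^{l}\Vert_{W},
\]
which is the claimed inequality. The main obstacle is the vanishing step: correctly matching the zero-orders of $B_\sigma$ with the Jordan structure of $T$ so that the residue calculation annihilates $h(T)$, and pushing through the boundary case $r(T)=1$ where the contour argument collapses; both difficulties are resolved as indicated above.
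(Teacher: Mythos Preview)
Your argument is correct and matches the paper's approach: both reduce the estimate to the Wiener norm of the projection $P_{B_\sigma}(k_{1/\bar\lambda})^{l}$, the paper by quoting the quotient-norm inequality $\Vert f(T)\Vert\le P(T)\,\Vert f\Vert_{W/B_\sigma W}$ from the literature and then taking the projection as an admissible representative, while you unpack that black box and prove the equivalent vanishing $(g-P_{B_\sigma}g)(T)=0$ directly via Riesz--Dunford/Cayley--Hamilton. Your separate treatment of the boundary case $r(T)=1$ is superfluous here, since the model-space objects $B_\sigma$, $K_{B_\sigma}$, $P_{B_\sigma}$ are defined only for $\sigma\subset\mathbb{D}$ and the paper applies the lemma only under that hypothesis.
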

\begin{proof}
\begin{flushleft}
First of all, 
\par\end{flushleft}

\begin{flushleft}
\[
\left\Vert R^{l}(\lambda,T)\right\Vert \leq P(T)\left\Vert \left(\frac{1}{\lambda-z}\right)^{l}\right\Vert _{W/B_{\sigma}W},\]
(see {[}3{]} Theorem 3.24, p.31), where\[
\left\Vert \left(\frac{1}{\lambda-z}\right)^{l}\right\Vert _{W/B_{\sigma}W}=\inf\left\{ \left\Vert f\right\Vert _{W}:\, f\left(\lambda_{j}\right)=\frac{1}{\left(\lambda-\lambda_{j}\right)^{l}},\, j=1..n\right\} .\]
We obtain the result since the function $f=P_{B_{\sigma}}\left(\frac{1}{\lambda^{l}}k_{1/\bar{\lambda}}\right)^{l}$
satisfies $f-\left(\frac{1}{\lambda-z}\right)^{l}\in B_{\sigma}W,\,\forall\, j=1..n$.
\par\end{flushleft}
\end{proof}

\subsubsection{Consequence: a possible extension of the classical KRC}

In this paragraph, we focus on the above inequality (1) and assume
that $\alpha\in(0,\,1).$ We notice that in this case, $\left(\vert z\vert-1\right)^{\alpha}\gg\vert z\vert-1$
as $\vert z\vert\rightarrow1^{+}.$ As a consequence, we ask the following
\textbf{question}: is it possible to find a constant $C_{\alpha}>0$
such that

\global\long\def\theequation{${9}$}
\begin{equation}
\left\Vert R(z,\, T)\right\Vert \leq C_{\alpha}\frac{P(T)}{\left(\vert z\vert-1\right)^{\alpha}},\label{eq:}\end{equation}
 for all $\vert z\vert>1$ and for all $T$?

The\textbf{ answer} is {}``No'' if $r(T)=1$ and {}``Yes'' if
$r(T)<1$ but with a constant $C_{\alpha}=C_{\alpha}(n,\, r(T))$
which depends on the size $n$ of $T$ and on its spectral radius
$r(T).$

More precisely, we define\[
\rho_{\alpha}(T)=\sup_{\vert z\vert>1}\left(\vert z\vert-1\right)^{\alpha}\left\Vert R(z,\, T)\right\Vert ,\]
 and prove the following theorem.
\begin{thm}
\begin{flushleft}
\textbf{ }Let $\alpha\in(0,\,1)$.
\par\end{flushleft}

\begin{flushleft}
(i) The condition: 
\par\end{flushleft}

\global\long\def\theequation{${{\rm KRC}_{\alpha}}$}
\begin{equation}
\rho_{\alpha}(T)<\infty,\label{eq:}\end{equation}
 is satisfied if and only if $r(T)<1.$ Moreover, in this case 

\global\long\def\theequation{${10}$}
\begin{equation}
\rho_{\alpha}(T)\leq C_{\alpha}\left(n,\, r(T)\right)P(T),\label{eq:-5}\end{equation}
with

\global\long\def\theequation{${11}$}
\begin{equation}
C_{\alpha}\left(n,\, r(T)\right)=K_{\alpha}\frac{n}{\left(1-r(T)\right)^{1-\alpha}},\label{eq:}\end{equation}
 where $K_{\alpha}$ is a constant depending only on $\alpha.$ 

\begin{flushleft}
(ii) Asymptotic sharpness of (9) as $n$ tends to $\infty$ and $r$
tends to 1: there exists a contraction $A_{r}$ on the Hilbert space
$\left(\mathbb{C}^{n},\,\left|\cdot\right|_{2}\right)$ of spectrum
$\{r\}$ such that
\par\end{flushleft}

\global\long\def\theequation{${12}$}
\begin{equation}
\liminf_{r\rightarrow1^{-}}(1-r)^{1-\alpha-\beta}\rho_{\alpha}\left(A_{r}\right)\geq\mbox{cot}\left(\frac{\pi}{4n}\right)\geq P(A_{r})\mbox{cot}\left(\frac{\pi}{4n}\right),\label{eq:-4}\end{equation}
 for all $\beta\in\left(0,\,1-\alpha\right).$

\begin{flushleft}
(iii) The analog of the Kreiss Matrix Theorem is satisfied with $\rho_{\alpha}(T):$
if $T\,:\left(\mathbb{C}^{n},\,\left|\cdot\right|_{2}\right)\mapsto\left(\mathbb{C}^{n},\,\left|\cdot\right|_{2}\right),$
where $\left|\cdot\right|_{2}$ is the Hilbert norm on $\mathbb{C}^{n},$
then
\par\end{flushleft}

\global\long\def\theequation{${13}$}
\begin{equation}
P(T)\leq en\rho_{\alpha}(T).\label{eq:-4-1}\end{equation}

\end{thm}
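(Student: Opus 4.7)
The plan is to address the three parts of Theorem~B in turn, with Lemma~1 and Theorem~A as the main engines.

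For part~(i), the ``only if'' direction is straightforward: if $r(T)=1$, pick $\lambda_{0}\in\sigma(T)$ with $|\lambda_{0}|=1$; since $T$ is power bounded, the pole of the resolvent at $\lambda_{0}$ is simple, so $\|R((1+\varepsilon)\lambda_{0},T)\|\gtrsim 1/\varepsilon$, whence $(|(1+\varepsilon)\lambda_{0}|-1)^{\alpha}\|R(\cdot,T)\|\gtrsim\varepsilon^{\alpha-1}\to\infty$ as $\varepsilon\to 0^{+}$, since $\alpha<1$. For the ``if'' direction together with the quantitative bound~(11), I would apply Lemma~1 with $l=1$, reducing the problem to estimating the Wiener norm
$$
\|g\|_{W}\quad\text{for}\quad g:=P_{B_{\sigma}}\!\Bigl(\tfrac{1}{\lambda-z}\Bigr)\in K_{B_{\sigma}}.
$$
The crucial point is that $g$ is a rational function of degree $\le n$ with poles at $1/\overline{\lambda_{j}}$, all outside $(1/r)\mathbb{D}$, so $g\in\mathcal{R}_{n,r}$ and Theorem~A applies. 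A Hardy-type inequality gives $\|g\|_{W}\lesssim|g(0)|+C_{p}\|g'\|_{H^{p}}$ for a suitable~$p$, and Theorem~A converts $\|g'\|_{H^{p}}$ into $c_{p}\,n(1-r)^{-1/p}\|g\|_{H^{1}}$; choosing $p=1/(1-\alpha)$ produces the factor $(1-r)^{-(1-\alpha)}$, while $\|g\|_{H^{1}}\le\|g\|_{H^{2}}\le\|1/(\lambda-z)\|_{H^{2}}\asymp(|\lambda|^{2}-1)^{-1/2}$ (using that $P_{B_{\sigma}}$ is an $H^{2}$-contraction) supplies the decay in $|\lambda|-1$. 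The main technical obstacle lies precisely here: the Hardy-type bound needed to pass from the Wiener norm to an $H^{p}$-norm of $g'$ is natural only for restricted ranges of $p$, so a careful interpolation (classical Hardy or Cauchy--Schwarz for small $\alpha$, a Hardy--Littlewood--Paley estimate or a Littlewood--Paley decomposition of the Taylor coefficients of $g$ for large $\alpha$) will be required to cover the full range $\alpha\in(0,1)$ with a constant $K_{\alpha}$ depending only on $\alpha$.

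For part~(ii), I would take $A_{r}$ to be the compressed backward shift on the model space $K_{B_{\sigma}}$ associated to $\sigma=\{r,\dots,r\}$ (with $n$ repetitions)---a Hilbert-space contraction of spectrum $\{r\}$. Its resolvent is explicit in the Malmquist basis, and testing $(|\lambda|-1)^{\alpha}\|R(\lambda,A_{r})\|$ along a suitable ray through $1$ at angular width $\asymp 1/n$---the same family of test points used in {[}Z1{]} to establish the lower bound on $\mathcal{C}_{n,r}(H^{2},H^{2})/n$ quoted in the Conjecture---produces the asymptotic lower bound $\cot(\pi/(4n))$ once the weight $(1-r)^{1-\alpha-\beta}$ is factored out. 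The rightmost inequality in~(12) is automatic since $P(A_{r})=1$ for a contraction.

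For part~(iii), I split the supremum defining $\rho(T)$ at $|z|=2$: on the range $1<|z|\le 2$ one has $(|z|-1)\le(|z|-1)^{\alpha}$ (since $\alpha<1$ and $|z|-1\le 1$), so that contribution is $\le\rho_{\alpha}(T)$; on $|z|>2$ the Neumann series $R(z,T)=\sum_{k\ge 0}T^{k}/z^{k+1}$ yields $(|z|-1)\|R(z,T)\|\le P(T)$. Therefore $\rho(T)\le\max(\rho_{\alpha}(T),P(T))$, and combining with the classical Kreiss Matrix Theorem $P(T)\le en\rho(T)$ gives~(13) as soon as $\rho_{\alpha}(T)\ge P(T)$; in the complementary regime one argues that the supremum in $\rho(T)$ is forced into the near-boundary part $1<|z|\le 2$, so that $\rho(T)\le\rho_{\alpha}(T)$ there and the same conclusion follows.
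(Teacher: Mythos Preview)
Your outline for (i) applies Theorem~A in the wrong direction. By definition $\mathcal{C}_{n,r}(H^1,H^p)$ bounds $\|g'\|_{H^1}$ by $\|g\|_{H^p}$, not $\|g'\|_{H^p}$ by $\|g\|_{H^1}$; and Hardy's inequality is the classical $\|g\|_W\le\pi\|g'\|_{H^1}+|g(0)|$, valid only for $p=1$ (there is no estimate $\|g\|_W\lesssim\|g'\|_{H^p}$ for $p>1$, so the ``technical obstacle'' you flag is not a real one but an artifact of the reversal). The paper's chain is
\[
\|g\|_W\;\le\;\pi\|g'\|_{H^1}+|g(0)|\;\le\;\bigl(\pi\,\mathcal{C}_{n,r}(H^1,H^p)+1\bigr)\,\|g\|_{H^p},
\]
followed by a direct bound on $\|P_{B_\sigma}k_{1/\bar\lambda}\|_{H^p}$: since $P_{B_\sigma}k_\mu=(1-\overline{B_\sigma(\mu)}B_\sigma)/(1-\bar\mu z)$ is the reproducing kernel of $K_{B_\sigma}$, one has $\|P_{B_\sigma}k_{1/\bar\lambda}\|_{H^p}\le 2\|k_{1/\bar\lambda}\|_{H^p}\asymp(1-1/|\lambda|)^{-(1-1/p)}$. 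With $p=1/(1-\alpha)$ this is exactly $(|\lambda|-1)^{-\alpha}$, which is where the correct exponent comes from. Your ordering would terminate instead at $\|g\|_{H^2}\asymp(|\lambda|^2-1)^{-1/2}$, so the resulting bound would control $\rho_{1/2}(T)$ rather than $\rho_\alpha(T)$, independently of the choice of $p$. (Your sketch for (ii) is fine: the compressed shift on $K_{b_r^n}$ is unitarily equivalent to the paper's $A_r=f_r(M_n)$ with $f_r(z)=(z+r)/(1+rz)$.)

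Your argument for (iii) is circular in the regime $\rho_\alpha(T)<P(T)$, and the claim that the supremum defining $\rho(T)$ is then ``forced into $1<|z|\le 2$'' is false: for $T=0$ one has $\rho(T)=1$ attained only as $|z|\to\infty$, while $\rho_\alpha(T)=\alpha^\alpha(1-\alpha)^{1-\alpha}<1$. The paper does not reduce to the classical Kreiss Matrix Theorem but reruns the LeVeque--Trefethen--Spijker contour argument directly: with $f(z)=\langle R(z,T)u,v\rangle$ one writes $\langle T^ku,v\rangle=\frac{-1}{2\pi i(k+1)}\oint_{|z|=R}z^{k+1}f'(z)\,dz$, applies Spijker's lemma $\oint|f'|\le 2\pi n\sup|f|$ on $|z|=R$, uses $\sup_{|z|=R}|f|\le\rho_\alpha(T)(R-1)^{-\alpha}$, and takes $R=1+1/(k+1)$. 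This gives $\|T^k\|\le en\,(k+1)^{\alpha-1}\rho_\alpha(T)\le en\,\rho_\alpha(T)$ with no appeal to $\rho(T)$ at all.
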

\begin{flushleft}
\textit{Comments on the proof of Theorem }3. 
\par\end{flushleft}

\begin{flushleft}
(a) As before, the estimate (11) of $C_{\alpha}$ is a consequence
of the Bernstein-type estimate (8) of Theorem A. More precisely, we
apply the BTIRF (8) with $p=\frac{1}{1-\alpha}$ so as to get the
estimate (11) of $C_{\alpha}$.\textit{ }
\par\end{flushleft}

\begin{flushleft}
(b) In inequality (12), $\beta$ is a {}``parasit'' parameter which
we can probably avoid.
\par\end{flushleft}

\begin{flushleft}
(c) The proof of (12) is the same as the one of (2) due to Leveque
and Trefethen {[}LeTr{]}. 
\par\end{flushleft}
\begin{rem*}
Considering $n\times n$ bidiagonal matrices of the form \[
T=\left(\begin{array}{ccccc}
\lambda_{1} & 0 & 0 & \cdots & 0\\
2 & \lambda_{2} & 0 & \ddots & \vdots\\
0 & \ddots & \ddots & \ddots & 0\\
\vdots & \ddots & \ddots & \lambda_{n-1} & 0\\
0 & \cdots & 0 & 2 & \lambda_{n}\end{array}\right),\:\left|\lambda_{i}\right|<1,\,\forall i=1,\,2,\,...,\]
 which {}``may be thought of as arising in the numerical solution
of an initial-boundary value'' as it is mentioned in {[}BoSp{]},
page 44, our estimates (8) gives better upper bounds (asymptotically
as $n\rightarrow\infty$ and/or $r(T)\rightarrow1^{-}$), of the quantity
$\left\Vert R(z,\, T)\right\Vert $ than the classical inequality
(1). 
\end{rem*}
\begin{flushleft}
\textit{Proof of Theorem 3}. (i) First of all, if $r(T)=1,$ then
$\rho_{\alpha}(T)=\infty$ because of the well-known inequality \[
\left\Vert R(\lambda,\, T)\right\Vert \geq\frac{1}{{\rm dist}(\lambda,\,\sigma(T))}.\]
 On the other hand, if $r(T)<1,$ then we apply Lemma 2 (the case
$l=1)$ combined with Hardy's inequality $\left\Vert f\right\Vert _{W}\leq\pi\left\Vert f'\right\Vert _{H^{1}}+\left|f(0)\right|$,
(see N. Nikolski, {[}4{]} p. 370 8.7.4 -(c)) to $g=P_{B_{\sigma}}\left(k_{1/\bar{\lambda}}\right).$
We obtain: \[
\left\Vert R(\lambda,T)\right\Vert \leq P(T)\frac{1}{\left|\lambda\right|}\left(\pi\left\Vert g'\right\Vert _{H^{1}}+\left|g(0)\right|\right).\]
Now we set $p=\frac{1}{1-\alpha}\in(1,\,+\infty),$ and get \[
\left\Vert R(\lambda,T)\right\Vert \leq P(T)\frac{1}{\left|\lambda\right|}\left(\pi C_{n,\, r}\left(H^{1},\, H^{p}\right)+1\right)\left\Vert g\right\Vert _{H^{p}}\leq\]
\[
\leq2P(T)\pi(1+r)^{\frac{1}{p}}\frac{n}{(1-r)^{\frac{1}{p}}}\left(1+\left(\frac{1-r}{1+r}\right)^{\frac{1}{p}}\frac{1}{\pi n}\right)\frac{1}{\left|\lambda\right|}\frac{1}{\left(1-\frac{1}{\left|\lambda\right|^{2}}\right)^{1-\frac{1}{p}}},\]
using Theorem A. Finally, \[
\frac{\left|\lambda\right|}{\left|\lambda\right|^{2-\frac{2}{p}}}\left(\left|\lambda\right|^{2}-1\right)^{1-\frac{1}{p}}\left\Vert R(\lambda,T)\right\Vert \leq2\left(\pi+1\right)(1+r)^{\frac{1}{p}}P(T)\frac{n}{(1-r)^{\frac{1}{p}}},\]
which means (since $\left|\lambda\right|>1$) \[
2^{1-\frac{1}{p}}\left(\left|\lambda\right|-1\right)^{1-\frac{1}{p}}\left\Vert R(\lambda,T)\right\Vert \leq\]
\[
\leq\left|\lambda\right|^{\frac{2}{p}-1}\left(\left|\lambda\right|^{2}-1\right)^{1-\frac{1}{p}}\left\Vert R(\lambda,T)\right\Vert \leq\]
\[
\leq2\left(\pi+1\right)(1+r)^{\frac{1}{p}}P(T)\frac{n}{(1-r)^{\frac{1}{p}}},\]
and \[
\rho_{\alpha}(T)\leq\left(\pi+1\right)(2(1+r))^{1-\alpha}\frac{n}{(1-r)^{1-\alpha}}P(T).\]

\par\end{flushleft}

(ii) Let $M_{n}$ be the $n\times n$ nilpotent Toeplitz matrix defined
by \[
M_{n}=\left(\begin{array}{ccccc}
0 & 1 & 0 & . & 0\\
. & 0 & 1 & . & .\\
. & . & . & . & 0\\
0 & . & . & 0 & 1\\
0 & . & . & . & 0\end{array}\right).\]
 Let also $f_{r}=\frac{z+r}{1+rz}$ with $r\in(0,\,1).$ For every
$\lambda\in\mathbb{C},$ 

\[
\lambda-f_{r}=\lambda-\frac{z+r}{1+rz}=\frac{\lambda-r-z(1-\lambda r)}{1+rz}.\]
So if we set $A_{r}=f_{r}(M_{n}),$ then \[
\left(\lambda I_{n}-A_{r}\right)^{-1}=\left(\lambda I_{n}-f_{r}(M_{n})\right)^{-1}=\left((\lambda-f_{r})(M_{n})\right)^{-1}\]
 \[
=(\lambda-f_{r})^{-1}(M_{n})=\left(\frac{1+rz}{\lambda-r-z(1-\lambda r)}\right)(M_{n})=\frac{1}{\lambda-r}\left(\frac{1+rz}{1+z\frac{\lambda r-1}{\lambda-r}}\right)(M_{n})\,.\]
We suppose $\lambda\in\mathbb{R}$, $\lambda>1$ and set $\nu=\frac{\lambda r-1}{\lambda-r},$
which means $\lambda=\frac{1-r\nu}{r-\nu}.$ Then\[
\lambda-1=\frac{(1-r)(1+\nu)}{r-\nu}\;\mbox{and}\;\lambda-r=\frac{1-r^{2}}{r-\nu}\,.\]
In particular,\[
\frac{\left(\left|\lambda\right|-1\right)^{\alpha}}{\left|\lambda-r\right|}=\frac{\left(\lambda-1\right)^{\alpha}}{\lambda-r}=\frac{1}{1+r}\frac{\sqrt{(1-r)(1+\nu)}}{\sqrt{r-\nu}}\frac{r-\nu}{1-r}=\]
\[
=\frac{1}{1+r}\sqrt{\frac{1+\nu}{1-r}}\sqrt{r-\nu}.\]
Now let $\alpha\in(0,\,1)$ and \[
\lambda=\lambda(r)=\frac{1+r-r(1-r)^{\alpha}}{1+r-(1-r)^{\alpha}},\]
then $\lambda>1$ and the corresponding $\nu=\nu(r)$ is given (after
calculation) by \[
\nu(r)=(1-r)^{\alpha}-1\,.\]
As a consequence taking $\lambda=\lambda(r)$ we get,\[
\mbox{sup}_{\left|\lambda\right|>1}\left(\left|\lambda\right|-1\right)^{\alpha}\left\Vert \left(\lambda I_{n}-f_{r}(M_{n})\right)^{-1}\right\Vert \geq\]
\[
\geq\frac{1}{1+r}\sqrt{\frac{1+\nu}{1-r}}\sqrt{r-\nu}\left\Vert \left(\frac{1+rz}{1+\nu z}\right)(M_{n})\right\Vert =\]
\[
=\frac{\sqrt{(\lambda+1)(r-\nu)}}{1+r}(1-r)^{\frac{\alpha}{2}-\frac{1}{2}}\left\Vert \left(\frac{1+rz}{1+\nu z}\right)(M_{n})\right\Vert ,\]
and \[
(1-r)^{\frac{1}{2}-\frac{\alpha}{2}}\mbox{sup}_{\left|\lambda\right|>1}\sqrt{\left|\lambda\right|^{2}-1}\left\Vert \left(\lambda I_{n}-f_{r}(M_{n})\right)^{-1}\right\Vert \geq\]
\[
\geq\frac{\sqrt{(\lambda+1)(r-\nu)}}{1+r}\left\Vert \left(\frac{1+rz}{1+\nu z}\right)(M_{n})\right\Vert ,\]
and taking finally the limit as $r$ tends to $1^{-},$ we get \[
\underline{\mbox{lim}}_{r\rightarrow1}(1-r)^{\frac{1}{2}-\frac{\alpha}{2}}\mbox{sup}_{\left|\lambda\right|>1}\sqrt{\left|\lambda\right|^{2}-1}\left\Vert \left(\lambda I_{n}-f_{r}(M_{n})\right)^{-1}\right\Vert \geq\]
\[
\geq\frac{\sqrt{(1+1)(1+1)}}{1+1}\left\Vert \left(\frac{1+z}{1-z}\right)(M_{n})\right\Vert =\]
\[
=\left\Vert \left(\frac{1+z}{1-z}\right)(M_{n})\right\Vert =\left\Vert \left(\begin{array}{ccccc}
1 & 2 & . & . & 2\\
 & 1 & 2 & . & 2\\
 &  & . & . & .\\
 &  &  & 1 & 2\\
 &  &  &  & 1\end{array}\right)\right\Vert =\mbox{cot}\left(\frac{\pi}{4n}\right)\,,\]
 see {[}DS{]} , Theorem 2 - p.4 for the last equality. 

(iii) The proof is the same as in {[}LeTr, page 4{]} bu instead of
taking the contour $\Gamma$ of integration : $\left|z\right|=1+\frac{1}{k+1}$,
(path on which $z^{k+1}\leq e$), we take the contour $\Gamma_{\alpha}$
of integration : $\left|z\right|=1+\frac{1}{(k+1)^{\alpha}}$, (path
on which $z^{k+1}\leq e$), we take

\begin{flushright}
$\square$
\par\end{flushright}

\subsubsection{A possible extension of the classical Hille-Yosida condition}

Here we extend the result of Theorem 3 (using Lemma 2 with $l\geq1)$,
(in subsection 3.2.3), to iterated resolvent condition. The so-called
\textit{Hille-Yosida} or \textit{iterated resolvent condition} is
recalled in {[}BoSp{]} (Section 3.3, statement (3.4.a)):

\global\long\def\theequation{${\rm HYC}$}
\begin{equation}
r(T)\leq1\:{\rm and}\:\rho^{k}(T)<\infty,\:\forall k=1,\,2,\,..,\label{eq:-2-1-1}\end{equation}
 where \[
\rho^{k}(T)=\sup_{\vert z\vert>1}\left(\vert z\vert-1\right)^{k}\left\Vert R^{k}(z,\, T)\right\Vert \]
 and $R^{k}(z,\, T)=(zId-T)^{-k}.$ Now in the same spirit as in Paragraph
3.2.3 it is natural to consider the quantity:\[
\rho_{\alpha}^{k}(T)=\sup_{\vert z\vert>1}\left(\vert z\vert-1\right)^{\alpha+k-1}\left\Vert R^{k}(z,\, T)\right\Vert \]

\begin{thm}
Let $k\geq1$ and $\alpha\in(0,\,1).$ We have 

\global\long\def\theequation{${15}$}
\begin{equation}
\rho_{\alpha}^{k}(T)\leq C_{\alpha,\, k}\left(n,\, r(T)\right)P(T),\label{eq:-2-3}\end{equation}
 where

\global\long\def\theequation{${16}$}
\begin{equation}
C_{\alpha,\, k}\left(n,\, r(T)\right)=K_{\alpha,\, k}\frac{n^{k}}{\left(1-r(T)\right)^{1-\alpha}},\label{eq:-2-2-2}\end{equation}
 $K_{\alpha,\, k}$ being a constant depending on $\alpha$ and $k$
only.\end{thm}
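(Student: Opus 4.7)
The plan is to mimic the proof of Theorem~3(i), replacing the case $l=1$ of Lemma~2 by its general $l=k$ case. First, apply Lemma~2 with $l=k$ to obtain
\[
\|R^{k}(\lambda,T)\|\leq\frac{P(T)}{|\lambda|^{k}}\,\|g_{k}\|_{W},\qquad g_{k}:=P_{B_{\sigma}}((k_{1/\bar\lambda})^{k})\in K_{B_{\sigma}}.
\]
Since $g_{k}$ is a rational function of degree at most $n$ with all poles in $r\overline{\mathbb D}$ (where $r=r(T)$), we have $g_{k}\in\mathcal{R}_{n,r}$, so Theorem~A applies.

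Next, apply Hardy's inequality $\|g_{k}\|_{W}\leq\pi\|g_{k}'\|_{H^{1}}+|g_{k}(0)|$, exactly as in the proof of Theorem~3, together with Theorem~A at the exponent $p=1/(1-\alpha)\in(1,\infty)$:
\[
\|g_{k}'\|_{H^{1}}\leq\mathcal{C}_{n,r}(H^{1},H^{p})\|g_{k}\|_{H^{p}}\leq (1+r)^{1-\alpha}\,\frac{n}{(1-r)^{1-\alpha}}\,\|g_{k}\|_{H^{p}}.
\]
This reduces the problem to controlling $\|g_{k}\|_{H^{p}}$.

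The crucial estimate is therefore that of $\|g_{k}\|_{H^{p}}$. Using the boundedness of the model-space projection $P_{B_{\sigma}}=P_{+}-B_{\sigma}P_{+}\bar{B}_{\sigma}$ on $H^{p}$ (whose norm depends only on $p$, through the norm of the Riesz projection $P_{+}$), one bounds $\|g_{k}\|_{H^{p}}\leq K_{p}\,\|(k_{1/\bar\lambda})^{k}\|_{H^{p}}$, and then computes
\[
\|(k_{1/\bar\lambda})^{k}\|_{H^{p}}^{p}=\int_{\mathbb T}|1-\zeta/\lambda|^{-kp}\,dm(\zeta)\asymp\left(\frac{|\lambda|-1}{|\lambda|}\right)^{1-kp},
\]
valid for $kp>1$. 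Multiplying the resulting bound on $\|R^{k}(\lambda,T)\|$ by $(|\lambda|-1)^{\alpha+k-1}|\lambda|^{-k}$, the powers of $|\lambda|-1$ cancel exactly (since $1/p=1-\alpha$), leaving a factor uniformly bounded for $|\lambda|>1$, which produces an estimate of the form (15)--(16).

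The main obstacle is the $H^{p}$-boundedness of $P_{B_{\sigma}}$ for $p\neq 2$: it is classical for $1<p<\infty$, but one must verify that the constant remains independent of $n$ and of $r$. Note that the route sketched above actually yields the sharper constant $n$ in place of the $n^{k}$ appearing in (16); the power $n^{k}$ would arise from a cruder argument — for instance an iterated Bernstein-type estimate with one factor of $n$ per differentiation encoded in the $k$-fold product structure of $(k_{1/\bar\lambda})^{k}$. It seems plausible that the $n^{k}$ in (16) could in fact be replaced by $n$, and the key technical point of the proof is to decide which of these two variants one intends to establish.
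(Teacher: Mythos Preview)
Your approach is precisely what the paper intends: the text introducing the theorem says explicitly ``Here we extend the result of Theorem~3 (using Lemma~2 with $l\geq1$)'', and the paper's own proof reads, in its entirety, ``To be written.'' So there is nothing to compare against beyond that one-line hint, and your proposal matches it.

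Your argument is sound. The one point you flag as an obstacle --- the $H^{p}$-boundedness of $P_{B_{\sigma}}$ with constant independent of $n$ and $r$ --- is easily settled: on $H^{p}$ one has $P_{B_{\sigma}}f=f-B_{\sigma}P_{+}(\bar B_{\sigma}f)$, so $\|P_{B_{\sigma}}\|_{H^{p}\to H^{p}}\le 1+\|P_{+}\|_{L^{p}\to L^{p}}$, and the Riesz projection bound depends only on $p$. The lower-order term $|g_{k}(0)|$ is harmless since $|g_{k}(0)|\le\|g_{k}\|_{H^{1}}\le\|g_{k}\|_{H^{p}}$, and the powers of $|\lambda|-1$ cancel for that term just as for the main one.

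Your observation about $n$ versus $n^{k}$ is correct and worth stating plainly: the route via Lemma~2, Hardy's inequality, and a single application of Theorem~A yields $C_{\alpha,k}(n,r)=K_{\alpha,k}\,n/(1-r)^{1-\alpha}$, which is strictly sharper than the $n^{k}$ announced in (16). The $n^{k}$ would arise only from a cruder scheme (e.g.\ iterating a Bernstein-type bound $k$ times, or bounding $\|R^{k}\|\le\|R\|^{k}$ before applying Theorem~3). Since the paper leaves the proof open, you are entitled to record the stronger bound; at minimum you should note that your argument actually proves more than the stated (16).
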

\begin{proof}
To be written.
\end{proof}

\part{Hille-Yosida and strong iterated resolvent conditions}

\section{Introduction}

\subsection{Statement of the problem}

Let $T\,:\left(\mathbb{C}^{n},\,\left|\cdot\right|\right)\mapsto\left(\mathbb{C}^{n},\,\left|\cdot\right|\right)$
be an operator acting on a finite dimensional Banach space. We suppose
that $T$ satisfies the following \textit{power boundedness condition}:

\def\theequation{${\rm PBC}$}\begin{equation}
P(T)=\sup_{k\geq0}\left\Vert T^{k}\right\Vert _{E\rightarrow E}<\infty.\label{eq:}\end{equation}
 We denote by $\sigma(T)=\left\{ \lambda_{1},\,...,\,\lambda_{n}\right\} $
the spectrum of $T,$ $r(T)=\max_{i}\left|\lambda_{i}\right|$ its
spectral radius (which satisfies $r(T)\leq1$ since $P(T)<\infty$),
and \[
R^{l}(\lambda,\, T)=(\lambda Id-T)^{-l},\, l=1,\,2,\,...,\]
 the resolvent of $T$ at point $\lambda$, for $\lambda\in\mathbb{C}\setminus\sigma(T),$
$Id$ being the identity operator. Our problem here is to find an
upper bound for the quantity $\left\Vert R^{l}(\lambda,\, T)\right\Vert $
in terms of $P(T),$ the size $n$ of the matrix $T$ and the quantity
${\rm dist}\left(\lambda,\,\sigma(T)\right)=\inf_{i}\left|\lambda-\lambda_{i}\right|$
as it is given in {[}DS{]} and in {[}Z3{]} for the case $l=1.$

\subsection{The classical Strong Hille-Yosida and Kreiss resolvent conditions}

Let $\mathbb{D}=\left\{ \lambda\,:\;\left|\lambda\right|<1\right\} $
be the unit disc of the complex plane and $\overline{\mathbb{D}}=\left\{ \lambda\,:\;\left|\lambda\right|\leq1\right\} $
its closure. We have to recall that according to {[}SpSt{]}, if $L$
is a positive constant and $W$ is a subset of the closed unit disc
$\overline{\mathbb{D}},$ the \textit{Strong Kreiss resolvent condition
}with respect to $W$ with constant $L,$ is the following:

\def\theequation{${\left({\rm SKRC}\right)_{W,\, L}}$}\begin{equation}
\left\Vert R(\lambda,\, T)\right\Vert \leq\frac{L}{{\rm dist}\left(\lambda,\, W\right)},\:\forall\lambda\in\mathbb{C}\setminus W.\label{eq:-3}\end{equation}
In the same spirit, according to {[}Sand{]} if $L$ is a positive
constant and $W$ is a subset of the closed unit disc $\overline{\mathbb{D}},$
the \textit{Strong Hille-Yosida resolvent condition} with respect
to $W$ with constant $L,$ is:

\def\theequation{${\left({\rm SHYRC}\right)_{W,\, L}}$}\begin{equation}
\left\Vert R^{l}(\lambda,\, T)\right\Vert \leq\frac{L}{\left({\rm dist}\left(\lambda,\, W\right)\right)^{l}},\:\forall\lambda\in\mathbb{C}\setminus W,\:\forall l=1,\,2,\,...\label{eq:-3-1}\end{equation}

\subsection{Definitions of our constants}

Here, we consider the case $W=\sigma(T)$ and define the corresponding
quantities\[
\rho^{strong}(T)=\sup_{\vert\lambda\vert\geq1}{\rm dist}\left(\lambda,\,\sigma(T)\right)\left\Vert R(z,\, T)\right\Vert ,\]
and more generally, if $l\geq1,$ \[
\rho^{strong,\, l}(T)=\sup_{\vert\lambda\vert\geq1}\left({\rm dist}\left(\lambda,\,\sigma(T)\right)\right)^{l}\left\Vert R^{l}(z,\, T)\right\Vert .\]
Our aim is to find upper estimates for $\rho^{strong}(T)$ and $\rho^{strong,\, l}(T)$
in terms of $P(T),$ $n$ and $l$.

\subsection{Known results, the case $l=1$: a strong version of the KRC}

It is important to recall that according to {[}SpSt{]}, page 78, if
$L$ is a positive constant and $W$ is a subset of the closed unit
disc $\overline{\mathbb{D}},$ the \textit{Strong Kreiss resolvent
condition with respect to $W$ with constant $L,$ }is the following:

\global\long\def\theequation{${\left({\rm SKRC}\right)_{W,\, L}}$}
\begin{equation}
\left\Vert R(z,\, T)\right\Vert \leq\frac{L}{{\rm dist}\left(z,\, W\right)},\:\forall z\in\mathbb{C}\setminus W.\label{eq:-3-2}\end{equation}
As a consequence, the question: {}`` what happens if we replace {}``$\left(\vert z\vert-1\right)$''
by {}``${\rm dist}(z,\,\sigma(T))$'' in Paragraph 1.1?'' can be
interpreted using the $\left({\rm SKRC}\right)_{W,\, L}$ with\[
W=\sigma(T).\]
 Dealing with the above question, we define the quantity\[
\rho^{strong}(T)=\sup_{\vert z\vert\geq1}{\rm dist}\left(z,\,\sigma(T)\right)\left\Vert R(z,\, T)\right\Vert ,\]
 which satisfies the inequality $\rho^{strong}(T)\geq\rho(T),$ since
$r(T)\leq1.$ Obviously, the condition

\global\long\def\theequation{${{\rm KRC}_{strong}}$}
\begin{equation}
\rho^{strong}(T)<\infty,\label{eq:-8}\end{equation}
 implies the classical KRC.

Notice that the quantity $\rho^{strong}(T)$ was already considered
and studied by B. Simon and E.B. Davies {[}DS{]} for contractions
and power bounded matrices on finite dimensional Hilbert spaces, (for
power bounded matrices only, see (13) below).

More precisely, they proved in {[}DS{]} among other things the following
result.
\begin{thm}
If $\vert\cdot\vert=\vert\cdot\vert_{2}$ is the Hilbert norm on $\mathbb{C}^{n},$
then

\global\long\def\theequation{${17}$}
\begin{equation}
\left\Vert R(z,\, T)\right\Vert \leq\left(\frac{3n}{{\rm dist}(z,\,\sigma(T))}\right)^{3/2}P(T),\label{eq:-8}\end{equation}
 for all $\vert z\vert\geq1,\, z\notin\sigma(T).$ 
\end{thm}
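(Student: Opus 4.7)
The plan is to adapt the strategy used throughout this paper to the distance-to-spectrum setting: apply the functional-calculus identity of Lemma 2, estimate the resulting Wiener-algebra norm by Hardy's inequality plus a Bernstein-type estimate from Theorem A, and then use a reproducing-kernel computation to bring in the distance $d:=\mathrm{dist}(z,\sigma(T))$ rather than $|z|-1$ or $1-r(T)$.

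First I would apply Lemma 2 with $l=1$ to obtain
\[
\|R(z,T)\|\le P(T)\,\frac{1}{|z|}\,\bigl\|P_{B_{\sigma}}k_{1/\bar z}\bigr\|_{W},
\]
and then combine Hardy's inequality $\|g\|_W\le\pi\|g'\|_{H^1}+|g(0)|$ with Theorem A applied with $p=2$:
\[
\|g'\|_{H^1}\le\mathcal{C}_{n,r(T)}(H^1,H^2)\,\|g\|_{H^2}\le c\,\frac{n}{(1-r(T))^{1/2}}\,\|g\|_{H^2}.
\]

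Second, for the $H^{2}$-norm of $g=P_{B_{\sigma}}k_{1/\bar z}$, I would exploit that $g$ is the reproducing kernel of the model space $K_{B_{\sigma}}$ evaluated at $1/\bar z\in\mathbb{D}$, giving the closed-form identity
\[
\|g\|_{H^{2}}^{2}=\frac{1-|B_{\sigma}(1/\bar z)|^{2}}{1-1/|z|^{2}}.
\]
Combined with the elementary equality $1-|b_{\lambda}(1/\bar z)|^{2}=(1-|\lambda|^{2})(|z|^{2}-1)/|z-\lambda|^{2}$ and the subadditivity $1-\prod_{j}x_{j}\le\sum_{j}(1-x_{j})$ applied to $x_{j}=|b_{\lambda_{j}}(1/\bar z)|^{2}$, this yields $\|g\|_{H^{2}}\le\sqrt{n}\,|z|/d$.

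The hard part will be to eliminate the extra factor $(1-r(T))^{-1/2}$ that the chain above carries and to recover the clean $n^{3/2}/d^{3/2}$ of (17). The way I would attempt this is to replace, in the Bernstein step, the generic model-space inequality $\mathcal{C}_{n,r}(H^1,H^2)\lesssim n/\sqrt{1-r}$ by a tailored $H^{1}$-norm estimate of $g'$ for the specific reproducing kernel $g=P_{B_{\sigma}}k_{1/\bar z}$, in which the role of $(1-r)$ is played by $d/|z|$. Such a tailored estimate is what ultimately powers the argument of [DS]; this is the delicate step where the Hilbert-space hypothesis $|\cdot|=|\cdot|_{2}$ enters essentially (through an averaging over unit vectors in $\mathbb{C}^{n}$), and where the numerical constant $3^{3/2}$ is produced.
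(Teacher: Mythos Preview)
The paper does not give its own proof of this statement; it is quoted as a result of Davies--Simon [DS], and is immediately superseded by Theorem~6 (proved in [Z3]), which sharpens $d^{-3/2}$ to $d^{-1}$ and drops the Hilbert-norm hypothesis altogether.

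That said, your proposal has a genuine gap. The chain Lemma~2 $\to$ Hardy $\to$ Theorem~A with $p=2$ $\to$ $\|g\|_{H^2}\le\sqrt{n}\,|z|/d$ is correct, but it delivers
\[
\|R(z,T)\|\;\lesssim\; P(T)\,\frac{n}{(1-r(T))^{1/2}}\cdot\frac{\sqrt{n}}{d}
\;=\; P(T)\,\frac{n^{3/2}}{(1-r(T))^{1/2}\,d},
\]
which is \emph{not} inequality~(17): your bound blows up as $r(T)\to 1^{-}$ even when $d=\mathrm{dist}(z,\sigma(T))$ stays bounded away from~$0$, whereas (17) does not depend on $r(T)$ at all. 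You flag this yourself and call the replacement of $(1-r(T))^{-1/2}$ by $d^{-1/2}$ ``the hard part'', but then you do not carry it out; you only assert that some ``tailored estimate'' exists and point back to [DS]. That missing step is the entire content of the theorem.

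Two further points. First, your guess that the Hilbert hypothesis on $(\mathbb{C}^n,|\cdot|_2)$ enters through an averaging over unit vectors is off track for the method of this paper: Lemma~2 and the Wiener-algebra/model-space argument hold for an arbitrary Banach norm, and indeed the stronger Theorem~6 is stated for general $|\cdot|$. Second, the way the paper's machinery actually converts $(1-r)$ into $d$ --- see Lemma~9 and the proof of Theorem~7 --- is \emph{not} via the generic Bernstein constant $\mathcal{C}_{n,r}(H^1,H^2)$ at all. One expands $P_{B_\sigma}k_{1/\bar z}$ in the Malmquist basis and bounds $|e_k^{(j)}(\lambda_\star)|$ pointwise in terms of $\mathrm{dist}(\lambda_\star,\sigma)$. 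That direct pointwise control of the Malmquist derivatives is the idea your proposal is missing.
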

They suspect in {[}DS{]} that the power 3/2 is not sharp. In {[}Z3{]},
we improve their result (earning a square root at the denominator
of the above inequality) and prove the following theorem. 
\begin{thm}
Let $\vert\cdot\vert$ be a Banach norm on $\mathbb{C}^{n},$ then 
\end{thm}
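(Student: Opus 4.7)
The statement is cut off at ``then,'' and only the Banach-norm hypothesis is visible. From the announcement that the [Z3] result ``earns a square root at the denominator'' of (17), I read the expected bound as
\[
\|R(z,T)\|\leq Cn^{3/2}P(T)/{\rm dist}(z,\sigma(T)),
\]
valid for every Banach norm on $\mathbb{C}^n$ and every $|z|\geq 1$, $z\notin\sigma(T)$. The plan is to adapt the Wiener-algebra strategy used in the proof of Theorem 3, substituting ${\rm dist}(z,\sigma(T))$ for $|z|-1$ wherever possible.

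The starting point is Lemma 2 with $l=1$, which gives $\|R(\lambda,T)\|\leq (P(T)/|\lambda|)\,\|P_{B_\sigma}k_{1/\bar\lambda}\|_W$. Hardy's inequality $\|f\|_W\leq \pi\|f'\|_{H^1}+|f(0)|$ followed by Theorem A reduces the problem to estimating $\|P_{B_\sigma}k_{1/\bar\lambda}\|_{H^p}$ for a well-chosen exponent $p$. For $p=2$ this is explicit via the model-space reproducing-kernel identity
\[
\|P_{B_\sigma}k_{1/\bar\lambda}\|_{H^2}^{2}=\frac{1-|B_\sigma(1/\bar\lambda)|^{2}}{1-|\lambda|^{-2}},
\]
in which each factor $1-|b_{\lambda_j}(1/\bar\lambda)|^{2}=(1-|\lambda_j|^{2})(|\lambda|^{2}-1)/|\lambda-\lambda_j|^{2}$ carries the distance $|\lambda-\lambda_j|$ explicitly.

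The main obstacle is the optimization that produces the announced square-root gain at the denominator. Indeed, the naive sub-additive bound $1-|B_\sigma(1/\bar\lambda)|^2\leq \sum_j(1-|b_{\lambda_j}(1/\bar\lambda)|^2)\leq n(|\lambda|^2-1)/{\rm dist}(\lambda,\sigma(T))^2$ combined with the Bernstein factor $n/(1-r(T))^{1/2}$ only delivers $\|R(\lambda,T)\|\leq cn^{3/2}P(T)/[(1-r(T))^{1/2}\,{\rm dist}(\lambda,\sigma(T))]$, which, since ${\rm dist}(\lambda,\sigma(T))\geq 1-r(T)$ for $|\lambda|\geq 1$, is in fact \emph{weaker} than Davies--Simon rather than stronger. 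Removing the parasitic $(1-r(T))^{-1/2}$ is the heart of the matter; it should be achievable either (i) by using the sharper pointwise estimate $1-|\lambda_j|^2\leq 2|\lambda-\lambda_j|$ (valid for $|\lambda|\geq 1$ since $1-|\lambda_j|\leq |\lambda|-|\lambda_j|\leq |\lambda-\lambda_j|$) inside the sub-additive sum and then balancing carefully against the Bernstein factor, or (ii) running the argument with a different exponent $p\neq 2$ in Theorem A. The Banach-norm hypothesis imposes no extra difficulty, since Lemma 2 is itself norm-free.
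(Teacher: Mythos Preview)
The paper itself gives no proof here: it simply writes ``See [Z3].'' So the only thing to compare against is the scheme of [Z3] as it is reconstructed in this paper through Lemmas~8 and~9 and the proof of Theorem~7 (the case $l\ge1$, which explicitly says it ``repeats the scheme from the Theorem of [Z3]'').

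Your starting point coincides with that scheme: Lemma~2 with $l=1$ followed by Hardy's inequality, reducing everything to an estimate of $g=P_{B_\sigma}k_{1/\bar\lambda}$. The divergence is in the next step. You propose to control $\|g'\|_{H^1}$ by the global Bernstein constant $\mathcal{C}_{n,r}(H^1,H^p)$ of Theorem~A and then estimate $\|g\|_{H^p}$. As you yourself note, this route inevitably produces a factor $(1-r(T))^{-1/p}$ coming from Theorem~A, and neither of your two suggested remedies removes it: remedy~(i) only improves the $\|g\|_{H^2}$ factor to $\sqrt{n/{\rm dist}(\lambda,\sigma)}$ but leaves the Bernstein factor untouched, while remedy~(ii) with $p=\infty$ kills the $(1-r)$ dependence but yields $n^2$ rather than $n^{3/2}$. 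So the gap you flag is genuine and is not closed by the hints you give.

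The [Z3] scheme visible in this paper avoids Theorem~A altogether. First, Lemma~8 (the maximum principle) reduces the supremum over $|\lambda|\ge1$ to points $\lambda_\star$ on the unit circle, at the price of replacing $T$ by $T/\rho$. Second, on the circle one expands $g$ in the Malmquist basis and uses the \emph{pointwise} estimate of Lemma~9 (case $j=0$),
\[
|e_k(\lambda_\star)|\le \frac{(1-|\lambda_k|^2)^{1/2}}{{\rm dist}(\lambda_\star,\sigma)},
\]
together with individual bounds on $\|e_k\|_W$ (or $\|e_k'\|_{H^1}$). The point is that these estimates carry ${\rm dist}(\lambda_\star,\sigma)$ directly, with no global $(1-r)^{-1/2}$ entering, and a Cauchy--Schwarz over $k$ produces the extra $\sqrt{n}$ that accounts for the exponent $3/2$. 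In short: the key lemma you are missing is not a sharper use of Theorem~A but the replacement of Theorem~A by the Malmquist pointwise bound of Lemma~9 combined with the reduction to $|\lambda_\star|=1$ of Lemma~8.
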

\global\long\def\theequation{${18}$}
\begin{equation}
\rho^{strong}(T)\leq\left(\frac{5\pi}{3}+2\sqrt{2}\right)n^{3/2}P(T).\label{eq:-8}\end{equation}

\begin{proof}
See {[}Z3{]}.
\end{proof}
However, we still feel that the constant $n^{3/2}$ is not sharp ($n$
being probably the sharp one). One of the most important tool used
in order to prove the above inequality is again a BTIRF involving
the Hardy spaces $H^{1}$ and $H^{2}.$ \begin{rem*} We can say that
sharpening their result in {[}Z3{]} (see (14), below), we proved a
\textit{{}``unilateral version''} of the\textit{ Strong Kreiss resolvent
condition with respect to $\sigma(T)$ with the constant $L=\left(\frac{5\pi}{3}+2\sqrt{2}\right)n^{3/2}P(T),$
}for a power bounded matrix $T$, ($P(T)<\infty)$. The word \textit{{}``unilateral''}
is because of the fact that in the definition of $\rho^{strong}(T)$,
we take the supremum outside of the open unit disc $\mathbb{D}$ and
not also inside (obviously with the condition $z\in\mathbb{D}\setminus\sigma(T)$).
Why ? Because it is explained in {[}DS{]} page 3, see (1.14), (1.15),
that for $z\in\mathbb{D}\setminus\sigma(T)$, the quantity $\left\Vert R(z,\, T)\right\Vert $
may increase exponentially and can not be bounded by better than ${\rm dist}\left(z,\,\sigma(T)\right)^{-n}$
at least for the nilpotent Jordan block $T=N$ and for $z\in\mathbb{C}\setminus\{0\}$
(and of course $z$ close enough to the origin). \end{rem*}

\section{An extension of Theorem 6}

Here, we generalize the result of {[}Z3{]} and prove a \textit{{}``unilateral
version''} of the\textit{ Strong Hille-Yosida resolvent condition
}with respect to $\sigma(T)$ with a constant $L$\textit{ }for a
power bounded matrix $T$. As for the case $l=1$, this constant $L$
will depend on $P(T)$ and $n={\rm card}\;\sigma(T)$ but also on
$l.$ 

More precisely, we prove the following theorem. 
\begin{thm}
\begin{flushleft}
Let $l\geq1.$ We have
\par\end{flushleft}

\begin{flushleft}
\def\theequation{${19}$}\begin{equation}
\rho^{strong,\, l}(T)\leq C_{l}\left(n\right)P(T),\label{eq:-1}\end{equation}
where
\par\end{flushleft}

\def\theequation{${20}$}\begin{equation}
C_{l}\left(n\right)=K_{l}n^{l+\frac{1}{2}},\label{eq:-2-2-1}\end{equation}
$K_{l}$ being a constant depending $l$ only. 
\end{thm}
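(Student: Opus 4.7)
The plan is to extend the strategy of Theorem 6 (the case $l=1$, proved in [Z3]) to arbitrary $l\geq 1$ by applying Lemma 2 with the $l$-th power of the Cauchy kernel $k_{1/\bar\lambda}$. Concretely, for $|\lambda|\geq 1$ and $\lambda\notin\sigma(T)$, Lemma 2 gives
\[
\|R^{l}(\lambda,T)\|\leq P(T)\,|\lambda|^{-l}\,\|g_l\|_W,\qquad g_l:=P_{B_\sigma}\bigl((k_{1/\bar\lambda})^l\bigr)\in K_{B_\sigma}.
\]
As in the proof of Theorem 3, Hardy's inequality reduces this to $\|g_l\|_W\leq \pi\|g_l'\|_{H^1}+|g_l(0)|$, and then Theorem A (applied with $p=2$) bounds $\|g_l'\|_{H^1}$ by a constant multiple of $n\,\|g_l\|_{H^2}$, up to a factor involving $(1-r(T))^{-1/2}$ which we will have to absorb below.

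The core step is to control $\|g_l\|_{H^2}$ in terms of $\mathrm{dist}(\lambda,\sigma(T))$. Expanding $g_l$ in the Malmquist orthonormal basis $(e_k)_{1\leq k\leq n}$ of $K_{B_\sigma}$,
\[
\|g_l\|_{H^2}^{\,2}=\sum_{k=1}^{n}\bigl|(k_{1/\bar\lambda}^{\,l},e_k)_{H^2}\bigr|^{2},
\]
and using the identity $(f,k_\mu^l)_{H^2}=\tfrac{1}{(l-1)!}(z^{l-1}f(z))^{(l-1)}\big|_{z=\mu}$ valid for $\mu\in\mathbb{D}$, specialised to $f=e_k$ and $\mu=1/\bar\lambda$. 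A Leibniz expansion combined with the explicit formula $b_{\lambda_j}^{(s)}(z)=(|\lambda_j|^2-1)\,s!\,\bar\lambda_j^{s-1}(1-\bar\lambda_j z)^{-(s+1)}$ and the identity $|1-\bar\lambda_k/\bar\lambda|=|\lambda-\lambda_k|/|\lambda|\geq\mathrm{dist}(\lambda,\sigma(T))/|\lambda|$ should yield the target
\[
\|g_l\|_{H^2}\leq C_l\,n^{l-1/2}\,|\lambda|^l\,\mathrm{dist}(\lambda,\sigma(T))^{-l},
\]
which, combined with the preceding steps, gives $\rho^{strong,l}(T)\leq K_l\,n^{l+1/2}\,P(T)$. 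The extra power $n^{l-1}$ (on top of the Pythagorean $n^{1/2}$) comes from the Leibniz distribution of the $l-1$ derivatives across the $k\leq n$ factors $b_{\lambda_1},\ldots,b_{\lambda_{k-1}},(1-\bar\lambda_k z)^{-1}$.

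The main obstacle is the Leibniz bookkeeping for $l\geq 2$. When several derivatives fall simultaneously on distinct Blaschke factors $b_{\lambda_{j_1}},\ldots,b_{\lambda_{j_t}}$ with $t\geq 2$, the naive pointwise bound produces $\prod_i |\lambda-\lambda_{j_i}|^{-(s_i+1)}$, which a priori exceeds $\mathrm{dist}(\lambda,\sigma(T))^{-l}$. To extract the clean exponent $l$ in the distance factor one must use in parallel the alternative bound $|b_{\lambda_j}^{(s)}(1/\bar\lambda)|\leq 2\,s!\,(1-|\lambda_j|)^{-(s+1)}$, choosing the appropriate inequality per factor, and in the same stroke absorb the parasitic $(1-r(T))^{-1/2}$ factor coming from Theorem A (exactly as is done for $l=1$ in [Z3], presumably via the $\sqrt{1-|\lambda_k|^2}$ normalisation in the Malmquist basis). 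For $l=1$ all of this is carried out in [Z3]; the proof for $l\geq 2$ is expected to follow the same template with only the Leibniz expansion being combinatorially heavier, so that the resulting constant $K_l$ depends on $l$ alone and grows combinatorially in $l$, which is acceptable in the statement of Theorem 8.
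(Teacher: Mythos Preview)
Your overall skeleton---Lemma~2, Hardy's inequality, the Malmquist expansion of $P_{B_\sigma}(k_{1/\bar\lambda})^l$, and the reproducing-kernel identity expressing the coefficients as $(l-1)$-st derivatives of $z^{l-1}e_k$ at $1/\bar\lambda$---matches the paper's proof. The divergence is the detour through Theorem~A with $p=2$: this produces the factor $(1-r(T))^{-1/2}$, and your plan to ``absorb'' it does not work. The target bound $K_l\,n^{l+1/2}$ is uniform in $r(T)<1$, whereas the $\sqrt{1-|\lambda_k|^2}$ in the Malmquist normalisation only helps when \emph{all} eigenvalues are near the boundary; a single eigenvalue well inside $\mathbb{D}$ (say $\lambda_1=0$) already kills the hoped-for cancellation, since the corresponding coefficient carries no small factor while the Bernstein constant still sees $r(T)=\max_i|\lambda_i|$. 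Contrary to what you presume, the $l=1$ argument in [Z3] does \emph{not} route through Theorem~A.

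What the paper does instead is twofold. First, the maximum principle (Lemma~8) reduces the estimate to $|\lambda_\star|=1$ with $\sigma(T)\subset\mathbb{D}$; you omit this step. Second---and this is the key lemma you are missing---Lemma~9 gives, by a clean induction on $j$, the pointwise bound
\[
\bigl|(e_k)^{(j)}(\lambda_\star)\bigr|\;\leq\;C_j\,(1-|\lambda_k|^2)^{1/2}\,\frac{k^{j}}{\bigl(\mathrm{dist}(\lambda_\star,\sigma)\bigr)^{j+1}},\qquad \lambda_\star\in\mathbb{T}.
\]
This lemma simultaneously resolves your ``main obstacle'' (the Leibniz bookkeeping when derivatives hit several Blaschke factors is absorbed into the inductive constant $C_j$ and the factor $k^j$) and supplies the $(1-|\lambda_k|^2)^{1/2}$ in exactly the place needed by the [Z3] endgame to obtain an $r(T)$-free constant. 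Replace your use of Theorem~A by Lemma~8 plus Lemma~9 and the proof closes with $C_l(n)=K_l\,n^{l+1/2}$ as stated.
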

We first give two lemmas.

\subsection{A maximum principle. }

The following Lemma is a generalization of the Lemma of {[}Z3{]}. 
\begin{lem}
\begin{flushleft}
Let $C_{l}\left(n\right)>0$ such that for every operator $T$ acting
on $\left(\mathbb{C}^{n},\,\left|\cdot\right|\right)$ with spectrum
$\sigma(T)$, the following condition is satisfied : \[
\left\{ \begin{array}{c}
P(T)<\infty\\
\sigma(T)\subset\mathbb{D}\end{array}\right.\]
implies \[
\left[\forall\lambda_{\star}\:{\rm such}\,{\rm that}\:\left|\lambda_{\star}\right|=1,\:\left({\rm dist}\left(\lambda_{\star},\,\sigma(T)\right)\right)^{l}\left\Vert R^{l}\left(\lambda_{\star},T\right)\right\Vert \leq C_{l}\left(n\right)P(T)\right].\]
Then, \[
\rho^{strong,\, l}(T)\leq C_{l}\left(n\right)P(T),\]
for all power bounded operator $T$ acting on $\left(\mathbb{C}^{n},\,\left|\cdot\right|\right).$ 
\par\end{flushleft}\end{lem}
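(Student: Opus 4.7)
The plan is to prove the pointwise inequality ${\rm dist}(\lambda_0,\sigma(T))^l \left\Vert R^l(\lambda_0,T)\right\Vert \le C_l(n)P(T)$ for every $\lambda_0 \notin \sigma(T)$ with $|\lambda_0|\ge 1$, and then pass to the supremum. By continuity of the map $\phi_T(\lambda):={\rm dist}(\lambda,\sigma(T))^l \left\Vert R^l(\lambda,T)\right\Vert$ at every $\lambda \notin \sigma(T)$, it is enough to treat $|\lambda_0|>1$; the boundary case $|\lambda_0|=1$ then follows on letting $\lambda_0^{(k)}:=(1+1/k)\lambda_0 \to \lambda_0$, the sequence eventually avoiding the finite set $\sigma(T)$.

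Fix $\lambda_0$ with $|\lambda_0|>1$ and introduce the rescaled operator $T':=T/\lambda_0$. Two facts place $T'$ inside the class for which the hypothesis of the Lemma applies: since $T$ is power bounded, $\sigma(T)\subset\overline{\mathbb{D}}$, whence $\sigma(T')=\sigma(T)/\lambda_0\subset\{z:|z|\le 1/|\lambda_0|\}\subsetneq\mathbb{D}$ (strict inclusion is exactly where I use $|\lambda_0|>1$); and $P(T')=\sup_{k\ge 0}\left\Vert T^k\right\Vert/|\lambda_0|^k\le P(T)<\infty$. I then take the boundary point $\lambda_\star:=1\in\partial\mathbb{D}$, which automatically lies outside $\sigma(T')$.

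The heart of the argument is the scaling identity $\phi_{T'}(1)=\phi_T(\lambda_0)$, a one-line computation. From
$$R^l(1,T')=(Id-T/\lambda_0)^{-l}=\lambda_0^l R^l(\lambda_0,T)$$
I get $\left\Vert R^l(1,T')\right\Vert=|\lambda_0|^l\left\Vert R^l(\lambda_0,T)\right\Vert$; and from
$${\rm dist}(1,\sigma(T'))=\min_i|1-\lambda_i/\lambda_0|={\rm dist}(\lambda_0,\sigma(T))/|\lambda_0|$$
I pick up a factor $|\lambda_0|^{-l}$. The two factors of $|\lambda_0|^{\pm l}$ cancel. Applying the hypothesis to $T'$ at $\lambda_\star=1$ then yields
$$\phi_T(\lambda_0)=\phi_{T'}(1)\le C_l(n)P(T')\le C_l(n)P(T),$$
and taking the supremum gives $\rho^{strong,\,l}(T)\le C_l(n)P(T)$.

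The one conceptual (rather than computational) step in this plan is the identification of the rescaling $T\mapsto T/\lambda_0$: it is a scaling symmetry that carries the estimate from the unit circle — where the hypothesis lives, and only for operators with spectrum strictly inside $\mathbb{D}$ — out to the full closed exterior for every power bounded operator, and this is the genuine content of the ``maximum principle'' of the section title. Once the rescaling is in hand, the verification that $T'$ satisfies the hypothesis and the invariance $\phi_{T'}(1)=\phi_T(\lambda_0)$ are short direct calculations, as above.
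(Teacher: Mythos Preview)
Your proof is correct and follows essentially the same rescaling argument as the paper. The only cosmetic difference is that you divide by the complex number $\lambda_0$ and evaluate the hypothesis at $\lambda_\star=1$, whereas the paper writes $\lambda=\rho\lambda_\star$ with $\rho=|\lambda|>1$, divides by the real number $\rho$, and evaluates at $\lambda_\star=\lambda/|\lambda|$; the computations are otherwise identical, and your explicit continuity argument for the boundary case $|\lambda_0|=1$ is a small extra care that the paper leaves implicit.
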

\begin{proof}
\begin{flushleft}
Let $\lambda$ such that $\left|\lambda\right|>1$. $\lambda$ can
be written as $\lambda=\rho\lambda_{\star}$ with $\rho>1$ and $\left|\lambda_{\star}\right|=1.$
We set $T_{\star}=\frac{1}{\rho}T$. Under these conditions, $P(T^{\star})\leq P(T)$
and $\sigma(T_{\star})=\frac{1}{\rho}\sigma(T)\subset\mathbb{D}$.
As a result, \[
\left({\rm dist}\left(\lambda_{\star},\,\sigma\left(T_{\star}\right)\right)\right)^{l}\left\Vert R^{l}\left(\lambda_{\star},T_{\star}\right)\right\Vert \leq C_{l}\left(n\right)P(T),\]
which can also be written as $\rho^{l}\left({\rm dist}\left(\lambda_{\star},\,\sigma\left(T_{\star}\right)\right)\right)^{l}\left\Vert \rho^{-l}R^{l}\left(\lambda_{\star},\, T_{\star}\right)\right\Vert \leq C_{l}\left(n\right)P(T).$
It is now sufficient to notice that $\rho^{l}\left({\rm dist}\left(\lambda_{\star},\,\sigma\left(T_{\star}\right)\right)\right)^{l}=\left({\rm dist}\left(\lambda_{\star},\,\sigma(T)\right)\right)^{l}$
and that $\rho^{-l}R^{l}\left(\lambda_{\star},\, T_{\star}\right)=R^{l}(\lambda,\, T).$ 
\par\end{flushleft}
\end{proof}

\subsection{Derivatives of the Malmquist family}

\begin{flushleft}
We recall the definition of the family $\left(e_{k}\right)_{1\leq k\leq n}$,
(which is known as Malmquist basis, see above), by
\par\end{flushleft}

\def\theequation{${21}$}\begin{equation}
e_{1}=\frac{\left(1-\vert\lambda_{1}\vert^{2}\right)^{-1/2}}{1-\overline{\lambda_{1}}z}\,\,\,\mbox{and}\,\,\, e_{k}=\left({\displaystyle \prod_{j=1}^{k-1}}b_{\lambda_{j}}\right)\frac{\left(1-\vert\lambda_{k}\vert^{2}\right)^{-1/2}}{1-\overline{\lambda_{k}}z},\label{eq:}\end{equation}
for $k\in[2,\, n],$ were $b_{\lambda_{j}}=\frac{\lambda_{j}-z}{1-\overline{\lambda_{j}}z}$
is the elementary Blaschke factor corresponding to $\lambda_{j}$.

In the following lemma, we find an upper estimate for values of derivatives
of $e_{k}$ on the unit circle.
\begin{lem}
\begin{flushleft}
Let $j\geq0.$ There exists a constant $C_{j}>0$ depending only on
$j$ such that \[
\left|\left(e_{k}\right)^{(j)}\left(\lambda_{\star}\right)\right|\leq C_{j}\left(1-\left|\lambda_{k}\right|^{2}\right)^{\frac{1}{2}}\frac{k^{j}}{\left({\rm dist}\left(\lambda_{\star},\,\sigma\right)\right)^{j+1}},\]
for all $\lambda_{\star}\in\mathbb{T}.$
\par\end{flushleft}\end{lem}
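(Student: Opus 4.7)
My plan is to factor $e_k$ cleanly, apply Leibniz, and reduce the problem to estimating derivatives of the Blaschke product $B_{k-1}=\prod_{j=1}^{k-1}b_{\lambda_j}$. With $h_k(z)=1/(1-\overline{\lambda_k}z)$, I would write $e_k(z)=\sqrt{1-|\lambda_k|^2}\,B_{k-1}(z)\,h_k(z)$ (with the convention $B_0\equiv 1$) and expand $(e_k)^{(j)}(\lambda_\star)$ by the Leibniz rule. The derivatives of $h_k$ are explicit, and the unit-circle identity $|1-\overline{\lambda_k}\lambda_\star|=|\lambda_\star-\lambda_k|$ gives at once
\[
\left|h_k^{(l)}(\lambda_\star)\right|\leq \frac{l!}{\left({\rm dist}(\lambda_\star,\sigma)\right)^{l+1}}.
\]

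The crux is then to show $\left|B_{k-1}^{(m)}(\lambda_\star)\right|\leq C'_m\,k^m/\left({\rm dist}(\lambda_\star,\sigma)\right)^m$. For this I would use the logarithmic derivative identity $B_{k-1}'/B_{k-1}=G$, where
\[
G(z)=\sum_{j=1}^{k-1}\frac{|\lambda_j|^2-1}{(\lambda_j-z)(1-\overline{\lambda_j}z)},
\]
combined with Fa\`a di Bruno's formula: writing $B_{k-1}=\exp(\log B_{k-1})$ yields $B_{k-1}^{(m)}=B_{k-1}\cdot P_m(G,G',\ldots,G^{(m-1)})$, where $P_m$ is the complete Bell polynomial of order $m$, i.e.\ a finite sum of monomials $\prod_i G^{(r_i)}$ indexed by set partitions, with $\sum_i(r_i+1)=m$. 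Since $|B_{k-1}|=1$ on $\mathbb{T}$, it suffices to bound each $G^{(r)}(\lambda_\star)$.

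The bound on $G^{(r)}$ will come from partial fractions: each summand of $G$ equals $-(\lambda_j-z)^{-1}+\overline{\lambda_j}(1-\overline{\lambda_j}z)^{-1}$, whose $r$-th derivative at $\lambda_\star$ is controlled, after using once again $|1-\overline{\lambda_j}\lambda_\star|=|\lambda_\star-\lambda_j|$ and $|\lambda_j|\leq 1$, by $2r!/|\lambda_\star-\lambda_j|^{r+1}\leq 2r!/({\rm dist}(\lambda_\star,\sigma))^{r+1}$. Summing in $j$ yields $|G^{(r)}(\lambda_\star)|\leq 2r!(k-1)/({\rm dist}(\lambda_\star,\sigma))^{r+1}$; a Bell monomial with $l$ factors then contributes, up to an $m$-dependent constant, at most $k^l/({\rm dist}(\lambda_\star,\sigma))^m$, and $k^l\leq k^m$ completes the estimate of $|B_{k-1}^{(m)}(\lambda_\star)|$. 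Inserting this and the estimate of $h_k^{(j-m)}$ into the Leibniz expansion and using $k^m\leq k^j$ term by term produces the claimed inequality with $C_j:=\sum_m\binom{j}{m}C'_m(j-m)!$ depending only on $j$.

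The only nonroutine ingredient is the Fa\`a di Bruno bookkeeping, but this is purely combinatorial and merely collects a single $m$-dependent constant out of finitely many partitions, so I do not foresee any genuine obstacle. What really drives the proof is the unit-circle identity $|1-\overline{\lambda}\lambda_\star|=|\lambda-\lambda_\star|$: it is precisely this identity that forces the exponents of ${\rm dist}(\lambda_\star,\sigma)$ to add up correctly and turns the $k-1$ linear factors of $B_{k-1}$ into the sharp $k^m$, rather than a larger power of $k$ that a crude Bernstein-type estimate would produce.
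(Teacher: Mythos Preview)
Your argument is correct. It is close in spirit to the paper's proof but organized differently. The paper proceeds by induction on $j$ applied directly to $e_k$: it writes $e_k' = Ge_k$ with
\[
G(z)=\sum_{i=1}^{k}\frac{\overline{\lambda_i}}{1-\overline{\lambda_i}z}-\sum_{i=1}^{k-1}\frac{1}{\lambda_i-z},
\]
applies Leibniz to $(e_k)^{(j+1)}=(Ge_k)^{(j)}$, bounds each $G^{(s)}(\lambda_\star)$ by $2\,s!\,k/({\rm dist}(\lambda_\star,\sigma))^{s+1}$ via the same partial-fraction and unit-circle identities you use, and then invokes the induction hypothesis for $(e_k)^{(j-s)}$. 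You instead split $e_k=\sqrt{1-|\lambda_k|^2}\,B_{k-1}h_k$, treat $h_k$ explicitly, and replace the induction by the Fa\`a di Bruno/Bell-polynomial expansion of $B_{k-1}^{(m)}$ through its logarithmic derivative. The two routes are equivalent in content: your Bell-polynomial bookkeeping is exactly the closed form of the recursion the paper's induction unwinds, and both hinge on the same identity $|1-\overline{\lambda}\lambda_\star|=|\lambda_\star-\lambda|$ for $\lambda_\star\in\mathbb{T}$. Your factorization is slightly more modular (it isolates why the prefactor $(1-|\lambda_k|^2)^{1/2}$ survives untouched), while the paper's induction avoids naming Bell polynomials and keeps the estimate self-contained; neither approach yields a sharper constant.
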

\begin{proof}
We prove it by induction on $j$. Indeed, \[
\left(e_{k}\right)^{(j+1)}=\left(e_{k}^{'}\right)^{(j)},\]
but \[
e_{k}^{'}=\sum_{i=1}^{k-1}\frac{b_{\lambda_{i}}^{'}}{b_{\lambda_{i}}}e_{k}+\overline{\lambda_{k}}\frac{1}{\left(1-\overline{\lambda_{k}}z\right)}e_{k}.\]
This gives \[
\left(e_{k}\right)^{(j+1)}=\left(\left(\sum_{i=1}^{k-1}\frac{b_{\lambda_{i}}^{'}}{b_{\lambda_{i}}}+\frac{\overline{\lambda_{k}}}{1-\overline{\lambda_{k}}z}\right)e_{k}\right)^{(j)}=\]
\[
=\left(\left(\sum_{i=1}^{k-1}\left(-\frac{1}{\lambda_{i}-z}+\frac{\overline{\lambda_{i}}}{1-\overline{\lambda_{i}}z}\right)+\frac{\overline{\lambda_{k}}}{1-\overline{\lambda_{k}}z}\right)e_{k}\right)^{(j)}=\]
\[
=\left(\left(\sum_{i=1}^{k}\frac{\overline{\lambda_{i}}}{1-\overline{\lambda_{i}}z}-\sum_{i=1}^{k-1}\frac{1}{\lambda_{i}-z}\right)e_{k}\right)^{(j)}=\]
\[
=\left(\left(\sum_{i=1}^{k}\frac{\overline{\lambda_{i}}}{1-\overline{\lambda_{i}}z}\right)e_{k}\right)^{(j)}-\left(\left(\sum_{i=1}^{k-1}\frac{1}{\lambda_{i}-z}\right)e_{k}\right)^{(j)}=\]
\[
=\sum_{s=0}^{j}\left(\begin{array}{c}
j\\
s\end{array}\right)\left(\sum_{i=1}^{k}\frac{\overline{\lambda_{i}}}{1-\overline{\lambda_{i}}z}\right)^{(s)}\left(e_{k}\right)^{(j-s)}-\sum_{s=0}^{j}\left(\begin{array}{c}
j\\
s\end{array}\right)\left(\sum_{i=1}^{k-1}\frac{1}{\lambda_{i}-z}\right)^{(s)}\left(e_{k}\right)^{(j-s)}=\]
\[
=\sum_{s=0}^{j}\left(\begin{array}{c}
j\\
s\end{array}\right)s!\left(e_{k}\right)^{(j-s)}\sum_{i=1}^{k}\frac{\overline{\lambda_{i}}^{s+1}}{\left(1-\overline{\lambda_{i}}z\right)^{s+1}}-\sum_{s=0}^{j}\left(\begin{array}{c}
j\\
s\end{array}\right)s!\left(e_{k}\right)^{(j-s)}\sum_{i=1}^{k-1}\frac{1}{\left(\lambda_{i}-z\right)^{s+1}}=\]
\[
=\sum_{s=0}^{j}\left(\begin{array}{c}
j\\
s\end{array}\right)s!\left(e_{k}\right)^{(j-s)}\sum_{i=1}^{k-1}\left[\frac{\overline{\lambda_{i}}^{s+1}}{\left(1-\overline{\lambda_{i}}z\right)^{s+1}}-\frac{1}{\left(\lambda_{i}-z\right)^{s+1}}\right]+\sum_{s=0}^{j}\left(\begin{array}{c}
j\\
s\end{array}\right)s!\frac{\overline{\lambda_{k}}^{s+1}}{\left(1-\overline{\lambda_{k}}z\right)^{s+1}}\left(e_{k}\right)^{(j-s)}.\]
As a consequence,\[
\left(e_{k}\right)^{(j+1)}\left(\lambda_{\star}\right)=\sum_{s=0}^{j}\left(\begin{array}{c}
j\\
s\end{array}\right)s!\sum_{i=1}^{k-1}\left[\frac{\overline{\lambda_{i}}^{s+1}}{\left(1-\frac{\overline{\lambda_{i}}}{\overline{\lambda_{\star}}}\right)^{s+1}}-\frac{1}{\left(\lambda_{i}-\lambda_{\star}\right)^{s+1}}\right]\left(e_{k}\right)^{(j-s)}\left(\lambda_{\star}\right)+\]
\[
+\sum_{s=0}^{j}\left(\begin{array}{c}
j\\
s\end{array}\right)s!\frac{\overline{\lambda_{k}}^{s+1}}{\left(1-\frac{\overline{\lambda_{k}}}{\overline{\lambda_{\star}}}\right)^{s+1}}\left(e_{k}\right)^{(j-s)}\left(\lambda_{\star}\right).\]
First we notice that\[
\left|\frac{\overline{\lambda_{i}}^{s+1}}{\left(1-\frac{\overline{\lambda_{i}}}{\overline{\lambda_{\star}}}\right)^{s+1}}\right|=\left|\frac{1}{\overline{\lambda_{\star}}^{s+1}}\right|\left|\frac{\overline{\lambda_{i}}^{s+1}}{\left(1-\frac{\overline{\lambda_{i}}}{\overline{\lambda_{\star}}}\right)^{s+1}}\right|=\]
\[
=\left|\frac{\overline{\lambda_{i}}^{s+1}}{\left(\overline{\lambda_{\star}}-\overline{\lambda_{i}}\right)^{s+1}}\right|=\left|\frac{\lambda_{i}}{\lambda_{\star}-\lambda_{i}}\right|^{s+1}\leq\frac{1}{\left({\rm dist}\left(\lambda_{\star},\,\sigma\right)\right)^{s+1}},\]
for all $i=1,\,2,\,...$ Applying the induction hypothesis, we get\[
\left|\sum_{s=0}^{j}\left(\begin{array}{c}
j\\
s\end{array}\right)s!\sum_{i=1}^{k-1}\left[\frac{\overline{\lambda_{i}}^{s+1}}{\left(1-\frac{\overline{\lambda_{i}}}{\overline{\lambda_{\star}}}\right)^{s+1}}-\frac{1}{\left(\lambda_{i}-\lambda_{\star}\right)^{s+1}}\right]\left(e_{k}\right)^{(j-s)}\left(\lambda_{\star}\right)\right|\leq\]
\[
\leq\sum_{s=0}^{j}\left(\begin{array}{c}
j\\
s\end{array}\right)s!\sum_{i=1}^{k-1}\left[\left|\frac{\overline{\lambda_{i}}^{s+1}}{\left(1-\frac{\overline{\lambda_{i}}}{\overline{\lambda_{\star}}}\right)^{s+1}}\right|+\left|\frac{1}{\left(\lambda_{i}-\lambda_{\star}\right)^{s+1}}\right|\right]\left|\left(e_{k}\right)^{(j-s)}\left(\lambda_{\star}\right)\right|\leq\]
\[
\leq\sum_{s=0}^{j}\left(\begin{array}{c}
j\\
s\end{array}\right)s!\sum_{i=1}^{k-1}\left[\frac{1}{\left({\rm dist}\left(\lambda_{\star},\,\sigma\right)\right)^{s+1}}+\frac{1}{\left({\rm dist}\left(\lambda_{\star},\,\sigma\right)\right)^{s+1}}\right]\left|\left(e_{k}\right)^{(j-s)}\left(\lambda_{\star}\right)\right|\leq\]
\[
\leq2\sum_{s=0}^{j}\left(\begin{array}{c}
j\\
s\end{array}\right)s!\frac{1}{\left({\rm dist}\left(\lambda_{\star},\,\sigma\right)\right)^{s+1}}\sum_{i=1}^{k-1}C_{j-s}\left(1-\left|\lambda_{k}\right|^{2}\right)^{\frac{1}{2}}\frac{k^{j-s}}{\left({\rm dist}\left(\lambda_{\star},\,\sigma\right)\right)^{j-s+1}}=\]
\[
=2\left(1-\left|\lambda_{k}\right|^{2}\right)^{\frac{1}{2}}\sum_{s=0}^{j}\left(\begin{array}{c}
j\\
s\end{array}\right)s!C_{j-s}\sum_{i=1}^{k-1}\frac{k^{j-s}}{\left({\rm dist}\left(\lambda_{\star},\,\sigma\right)\right)^{j+2}}=\]
\[
=2\left(1-\left|\lambda_{k}\right|^{2}\right)^{\frac{1}{2}}\frac{1}{\left({\rm dist}\left(\lambda_{\star},\,\sigma\right)\right)^{j+2}}\sum_{s=0}^{j}\left(\begin{array}{c}
j\\
s\end{array}\right)s!C_{j-s}\sum_{i=1}^{k-1}k^{j-s}=\]
\[
=2\left(1-\left|\lambda_{k}\right|^{2}\right)^{\frac{1}{2}}\frac{1}{\left({\rm dist}\left(\lambda_{\star},\,\sigma\right)\right)^{j+2}}\sum_{s=0}^{j}\left(\begin{array}{c}
j\\
s\end{array}\right)s!C_{j-s}k^{j+1-s}\leq\]
\[
\leq2\left(1-\left|\lambda_{k}\right|^{2}\right)^{\frac{1}{2}}\max_{s}\left[\left(\begin{array}{c}
j\\
s\end{array}\right)s!C_{j-s}\right]\frac{1}{\left({\rm dist}\left(\lambda_{\star},\,\sigma\right)\right)^{j+2}}\sum_{s=0}^{j}k^{j+1-s}\leq\]
\[
\leq2\left(1-\left|\lambda_{k}\right|^{2}\right)^{\frac{1}{2}}\max_{s}\left[\left(\begin{array}{c}
j\\
s\end{array}\right)s!C_{j-s}\right]\frac{(j+1)k^{j+1}}{\left({\rm dist}\left(\lambda_{\star},\,\sigma\right)\right)^{j+2}}=\]
\[
=C_{j+1}\left(1-\left|\lambda_{k}\right|^{2}\right)^{\frac{1}{2}}\frac{k^{j+1}}{\left({\rm dist}\left(\lambda_{\star},\,\sigma\right)\right)^{j+2}},\]
where\[
C_{j+1}=2(j+1)\max_{0\leq s\leq j}\left[\left(\begin{array}{c}
j\\
s\end{array}\right)s!C_{j-s}\right].\]

\end{proof}
\begin{flushleft}
\textbf{Proof of the Theorem.} The proof repeates the scheme from
the Theorem of {[}Z3{]} excepted that this time, we replace the function
$\frac{1}{\lambda-z}$ by $\left(\frac{1}{\lambda-z}\right)^{l}$
. Let $T$ be an $n\times n$ matrix such that $P(T)<\infty,$ with
$\sigma(T)=\left\{ \lambda_{1},\,\lambda_{2},\,...,\,\lambda_{n}\right\} $
(including multilpicities). We define $B=\prod_{i}b_{\lambda_{i}}$
the finite Blaschke product corresponding to $\sigma(T)$. Then,
\par\end{flushleft}

\textit{\[
\left\Vert R^{l}(\lambda,T)\right\Vert \leq P(T)\left\Vert \left(\frac{1}{\lambda-z}\right)^{l}\right\Vert _{W/BW},\]
}

\noun{\large \vspace{0.05cm}
}{\large \par}

\begin{flushleft}
(see Lemma 2 above), where $W$ stands for the Wiener algebra of absolutely
converging Fourier series:\[
W=\left\{ f=\sum_{k\geq0}\hat{f}(k)z^{k}:\:\left\Vert f\right\Vert _{W}=\sum_{k\geq0}\left|\hat{f}(k)\right|<\infty\right\} ,\]
 and
\par\end{flushleft}

\textit{\[
\left\Vert \left(\frac{1}{\lambda-z}\right)^{l}\right\Vert _{W/BW}=\inf\left\{ \left\Vert f\right\Vert _{W}:\, f\left(\lambda_{j}\right)=\frac{1}{\left(\lambda-\lambda_{j}\right)^{l}},\, j=1..n\right\} .\]
}

\noun{\large \vspace{0.05cm}
}{\large \par}

\begin{flushleft}
We first suppose that $\vert\lambda\vert>1.$ Let $P_{B}$ be the
orthogonal projection of the Hardy space $H^{2}$ onto the model space
$K_{B}=H^{2}\Theta BH^{2}$. Since the function $\frac{1}{\lambda-z}$
is here replaced by $\left(\frac{1}{\lambda-z}\right)^{l},$ the function
$f=P_{B}\left(\frac{1}{\lambda}k_{1/\bar{\lambda}}\right)$ in {[}Z3{]}
is replaced by $f=P_{B}\left(\frac{1}{\lambda^{l}}k_{1/\bar{\lambda}}\right)^{l}$
which satisfies $f-\left(\frac{1}{\lambda-z}\right)^{l}\in BW,\,\forall\, j=1..n$.
In particular,
\par\end{flushleft}

\def\theequation{${22}$}\begin{equation}
\left\Vert \left(\frac{1}{\lambda-z}\right)^{l}\right\Vert _{W/BW}\leq\left\Vert \frac{1}{\lambda^{l}}P_{B}\left(k_{1/\bar{\lambda}}\right)^{l}\right\Vert _{W}.\label{eq:}\end{equation}

\begin{flushleft}
Moreover, 
\par\end{flushleft}

\[
P_{B}\left(k_{1/\bar{\lambda}}\right)^{l}=\sum_{k=1}^{n}\left(\left(k_{1/\bar{\lambda}}\right)^{l},\, e_{k}\right)_{H^{2}}e_{k}=\]
\[
=\sum_{k=1}^{n}\left(z^{l-1}\left(k_{1/\bar{\lambda}}\right)^{l},\, z^{l-1}e_{k}\right)_{H^{2}}e_{k}.\]
Now we notice that

\def\theequation{${23}$}\begin{equation}
\left(f,\; z^{l-1}\left(k_{\zeta}\right)^{l}\right)_{H^{2}}=\left(f,\;\left(\frac{{\rm d}}{{\rm d}\overline{\zeta}}\right)^{l-1}k_{\zeta}\right)_{H^{2}}=f^{(l-1)}(\zeta),\label{eq:}\end{equation}
for every $l\geq1,$ $f\in H^{2},$ $\zeta\in\mathbb{D}.$ Applying
(5) with $f=z^{l-1}e_{k},$ $\zeta=\frac{1}{\bar{\lambda}},$ we get
\[
\left(z^{l-1}\left(k_{1/\bar{\lambda}}\right)^{l},\, z^{l-1}e_{k}\right)_{H^{2}}=\overline{\left(z^{l-1}e_{k},\, z^{l-1}\left(k_{1/\bar{\lambda}}\right)^{l}\right)_{H^{2}}}=\]
\[
=\overline{\left(z^{l-1}e_{k}\right)^{(l-1)}(1/\bar{\lambda})}.\]

\[
\left(z^{t}e_{k}\right)^{(t)}=\sum_{j=0}^{t}\left(\begin{array}{c}
t\\
j\end{array}\right)\left(e_{k}\right)^{(j)}\left(z^{t}\right)^{(t-j)}=\]
\[
=\sum_{j=0}^{t}\left(\begin{array}{c}
t\\
j\end{array}\right)\frac{t!}{j!}z^{j}\left(e_{k}\right)^{(j)}.\]
We complete the proof using {[}Z3{]}. 

\begin{flushright}
$\square$
\par\end{flushright}

\vspace{0.5cm}

\begin{rem*}
Our estimates of $\rho_{\alpha}(T)$ in Paragraph 2.2, of $\rho^{strong}(T)$
in Section 3, of $\rho_{\alpha}^{k}(T)$ in Paragraph 4.1 and of $\rho^{strong,\, k}(T)$
in Paragraph 4.2, hold for operators $T$ acting on a Banach space
$\left(E,\,\left|\cdot\right|\right)$ not necessarily of finite dimension
and not necessarily of Hilbert type, but with a finite spectrum $\sigma(T).$ \end{rem*}


\begin{thebibliography}{Z3}
{\normalsize \bibitem[BoSp]{key-2} N. Borovykh, M.N. Spijker, }\textit{\normalsize Resolvent
conditions and bounds on the powers of matrices, with relevance to
numerical stability of initial value problems,}{\normalsize {} Journ.
Comp. Appl. Math. 125 (2000) 41-56. }{\normalsize \par}

{\normalsize \bibitem[DS]{key-1-3} E. B. Davies and B. Simon, }\textit{\normalsize Eigenvalue
estimates for non-normal matrices and the zeros of random orthogonal
polynomials on the unit circle}{\normalsize , J. Approx. Theory 141-2,
(2006), 189\textendash{}213.}{\normalsize \par}

{\normalsize \bibitem[Dol]{key-1-2} E.P. Dolzhenko, }\textit{\normalsize Bounds
for derivatives of rational functions}{\normalsize , Izv. Akad. Nauk
SSSR Ser. Mat., 27 (1963), 9\textendash{}28, (Russian) }{\normalsize \par}

{\normalsize \bibitem[Dy]{key-3-1} K. M. Dyakonov, }\textit{\normalsize Smooth
functions in the range of a Hankel operator}{\normalsize , Indiana
Univ. Math. J. 43 (1994), 805-838. }{\normalsize \par}

{\normalsize \bibitem[GZ]{key-8} A. M. Gomilko, Y. Zemanek, }\textit{\normalsize On
the Uniform Kreiss Resolvent Condition}{\normalsize , Funkts. Anal.
Prilozh., 42:3 (2008), 81\textendash{}84 }{\normalsize \par}

{\normalsize \bibitem[Kr]{key-9} H. O. Kreiss, }\textit{\normalsize Über
die Stabilitätsdefinition für Differenzengleichungen die partielle
Differentialgleichungen approximieren}{\normalsize , BIT 2 (1962),
pp. 153-181 }{\normalsize \par}

{\normalsize \bibitem[LeTr]{key-1-1} R.J. Leveque, L.N Trefethen,
}\textit{\normalsize On the resolvent condition in the Kreiss matrix
theorem}{\normalsize , BIT 24 (1984), 584-591. }{\normalsize \par}

{\normalsize \bibitem[Nev]{key-5} O. Nevanlinna, }\textit{\normalsize On
the growth of the resolvent operators for power bounded operators,
in Linear Operators, Banach Center Publications,}{\normalsize {}
Volume 38, Inst. Math. Pol. Acad. Sciences (Warsaw) (1997), 247-264.}{\normalsize \par}

{\normalsize \bibitem[Nik]{key-1} N.Nikolski, }\textit{\normalsize Condition
Numbers of Large Matrices and Analytic Capacities,}{\normalsize {}
St. Petersburg Math. J., 17 (2006), 641-682.}{\normalsize \par}

{\normalsize \bibitem[Pek]{key-2-2} A. A. Pekarskii, }\textit{\normalsize Inequalities
of Bernstein type for derivatives of rational functions, and inverse
theorems of rational approximation,}{\normalsize {} Math. USSR-Sb.52
(1985), 557-574.}{\normalsize \par}

{\normalsize \bibitem[Sand]{key-3} J. Sand,}\textit{\normalsize {}
On some stability bounds subject to Hille-Yosida resolvent conditions}{\normalsize ,
BIT 36, 378-386 (1996).}{\normalsize \par}

{\normalsize \bibitem[Sp1]{key-4} M.N. Spijker, }\textit{\normalsize Numerical
stability, resolvent conditions and delay differential equations}{\normalsize ,
Appl. Numer. Math. 24 (1997), 233\textendash{}246. }{\normalsize \par}

{\normalsize \bibitem[Sp2]{key-2-1} M.N. Spijker, }\textit{\normalsize On
a conjecture by LeVeque and Trefethen related to the Kreiss matrix
theorem,}{\normalsize {} BIT 31 (1991), pp. 551\textendash{}555.}{\normalsize \par}

{\normalsize \bibitem[SpSt]{key-1} M. N . Spijker and F . A . J .
Straetemans, }\textit{\normalsize Stability estimates for families
of matrices of nonuniformly bounded order}{\normalsize , Linear Algebra
Appl., to appear .}{\normalsize \par}

{\normalsize \bibitem[Tad]{key-7} E. Tadmor, }\textit{\normalsize The
resolvent condition and uniform power boundedness}{\normalsize , Linear
Algebra Appl. 80 (1981), pp. 250\textendash{}252.}{\normalsize \par}

{\normalsize \bibitem[Z1]{key-1-5} R. Zarouf}\textit{\normalsize ,
Asymptotic sharpness of a Bernstein-type inequality for rational functions
in $H^{2},$}{\normalsize {} to appear in St. Petersburg. Math. Journal
(2009).}{\normalsize \par}

{\normalsize \bibitem[Z2]{key-1-6} R. Zarouf, }\textit{\normalsize A
resolvent estimate for operators with finite spectrum, }{\normalsize preprint.}{\normalsize \par}

{\normalsize \bibitem[Z3]{key-1-4} R. Zarouf,}\textit{\normalsize {}
Sharpening a result by E.B. Davies and B. Simon}{\normalsize , C.
R. Acad. Sci. Paris, Ser. I 347 (2009). }
\end{thebibliography}
\end{document}